 \def\@textbottom{\vskip \z@ \@plus 8pt}
 \let\@texttop\relax
\newcommand{\QQ}{\mathbf{Q}}
\newcommand{\HH}{\mathbf{H}}
\newcommand{\CC}{\mathbf{C}}
\newcommand{\ZZ}{\mathbf{Z}}
\newcommand{\PP}{\mathbf{P}}
\newcommand{\FF}{\mathbf{F}}
\newcommand{\calL}{\mathcal{L}}
\renewcommand{\div}{\mathrm{div}}
\newcommand{\Gal}{\mathrm{Gal}}
\newcommand{\F}{H_\tau[\curve]}
\newcommand{\Fx}{H_\tau[x]}
\newcommand{\Fj}{H_\tau[j]}
\newcommand{\Ff}{H_\tau[f]}
\newcommand{\HC}{H_\tau[\curve]}
\newcommand{\Hx}{H_\tau[x]}
\newcommand{\Cl}{\mathrm{Cl}}
\newcommand{\D}{\mathcal{D}}
\newcommand{\mc}{\mathcal}
\newcommand{\fraka}{\mathfrak{a}}
\newcommand{\xnul}[1]{X^0(#1)}
\newcommand{\xnulN}{\xnul{N}}
\newcommand{\xplus}[1]{X^0_+(#1)}
\newcommand{\xplusN}{\xplus{N}}
\newcommand{\curve}{C}
\DeclareMathOperator{\ord}{ord}
\DeclareMathOperator{\SL}{SL}
\numberwithin{equation}{section}
\theoremstyle{definition}
\newtheorem{definition}[equation]{Definition}
\newtheorem{theorem}[equation]{Theorem}
\newtheorem{proposition}[equation]{Proposition}
\newtheorem{remark}[equation]{Remark}
\newtheorem{example}[equation]{Example}
\newtheorem{lemma}[equation]{Lemma}
\begin{document}
	
	\title{Generalized class polynomials}
	
	\author{Marc Houben}
	\address{Marc Houben. Departement Wiskunde, KU Leuven,
		Celestijnenlaan 200B -- bus 2400,
		3001 Leuven,
		Belgium\vspace{-0.25cm}}
	\address{imec-COSIC, KU Leuven, Kasteelpark Arenberg 10/2452, 3001 Leuven, Belgium \vspace{-0.25cm}}
	\address{Mathematisch Instituut, Universiteit Leiden,
		P.O.\ box 9512,
		2300 RA Leiden,
		The Netherlands}
	\email{marc.houben@kuleuven.be}
	
	\author{Marco Streng}
	\address{Marco Streng. Mathematisch Instituut, Universiteit Leiden,
		P.O.\ box 9512,
		2300 RA Leiden,
		The Netherlands}
	\email{streng@math.leidenuniv.nl}
	\urladdr{https://www.math.leidenuniv.nl/~streng}
	
	\begin{abstract}
	
	The Hilbert class polynomial has as roots the $j$-invariants of elliptic curves
	whose endomorphism ring is a given imaginary quadratic order.
	It can be used to compute elliptic curves over finite fields
	with a prescribed number of points.
	Since its coefficients are typically rather large,
	there has been continued interest in finding alternative modular functions
	whose corresponding class polynomials are smaller.
	Best known are Weber's functions,
	which reduce the size by a factor of 72
	for a positive density subset of imaginary quadratic discriminants.
	On the other hand, Bröker and Stevenhagen showed that
	no modular function will ever do better
	than a factor of~100.83.
	We introduce a generalization of class polynomials,
	with reduction factors that are not limited by the Bröker-Stevenhagen bound.
	We provide examples matching Weber's reduction factor.
	For an infinite family of discriminants,
	their reduction factors surpass those of
	all previously known
	modular functions
	by a factor at least 2.
	\end{abstract}
	
	\maketitle

	\section{Introduction}
	
	The \emph{Hilbert class polynomial} $H_D[j]$
	of the imaginary quadratic order $\mathcal{O}$ of discriminant $D$
	is the minimal polynomial of the $j$-invariant
	of an elliptic curve with endomorphism ring~$\mathcal{O}$.
	It is a defining polynomial of the ring class field of $\mathcal{O}$
	and can be used for constructing elliptic curves over a finite field with a given number of points.
	Its coefficients are however rather large, which limits its practical usefulness.
	Already in 1908, Weber~\cite{weber3} therefore introduced alternative
	\emph{class invariants} to be used instead of $j$,
	which resulted in \emph{class polynomials}
	with coefficients that have roughly $1/72$ of the digits of the coefficients
	of the Hilbert class polynomial for certain discriminants.
	
	There has been continued interest in alternative class invariants
	ever since (e.g.~\cite{birch1969,
		schertz1976,
		gee-stevenhagen1998,
		gee1999,
		schertz2002weber,
		enge-morain2002,
		enge-schertz2004,
		enge-schertz2005,
       broker-stevenhagen-constructing,
       enge-sutherland2010,
		enge-schertz2013,
		enge-morain2014}).
	None however matched, let alone surpassed, the factor $72$ of Weber's
	functions.
	Moreover, Br\"oker and Stevenhagen~\cite{broker-stevenhagen-constructing} showed
	that no class invariant will ever do better than a factor~100.83.
	Under Selberg's eigenvalue conjecture \cite[Conjecture~1]{Selberg}, this bound reduces to~96.
	
	We introduce \emph{generalized (multivariate) class polynomials},
	define an appropriate notion of their \emph{reduction factor},
	and show that this notion indeed gives a measure of their ``size''
	compared to the Hilbert class polynomial (Section \ref{sec:reductionfactors}).
	Contrary to classical class polynomials,
	the reduction factors of generalized class polynomials are
	not limited by the Br\"oker-Stevenhagen bound.
	
	We give a family of generalized class polynomials
	for which we prove that the reduction factor matches
	Weber's $72$ for a large range of
	values of~$D$, including infinitely many values of $D$ where
	no reduction of $36$ or better was previously known
	(Section~\ref{sec:x0}).
	We also give an example that possibly 
	achieves the factor~$120$ (Remark~\ref{rem:120}).
	
	Though the focus of this paper is on introducing the generalized class invariants
	and studying their height, we also give a preliminary analysis indicating
    that the height reduction leads to a speed-up in their computation
    (Section~\ref{sec:runtime}),
    and we show how to use them for constructing elliptic curves over finite fields (Section~\ref{sec:apply}).
    
	\subsection*{Acknowledgements}
	
	The authors would like to thank
	Karim Belabas, Peter Bruin, Andreas Enge, David Kohel, Filip Najman, and Andrew Sutherland
	for helpful discussions and Edgar Costa for helping us use the LMFDB.
	The authors would further like to thank the 
	organisers Samuele Anni,
	Valentijn Karemaker, and Elisa Lorenzo Garc\'ia of
	AGC${}^2$T 2021 for the inspiring setting in which the first ideas
	for this project came to be.
	Finally, the authors would like to thank the anonymous referees for their many comments
	that led to improvements of the exposition.
	The first-listed author is supported by the Research Foundation -- Flanders (FWO) under a PhD Fellowship Fundamental Research.

	\section{Generalized class polynomials}\label{sec:section2}
	
\begin{definition}\label{def:modularcurve}
By a \emph{modular curve over $\QQ$}
we mean a smooth, projective, geometrically irreducible
curve $\curve$ over $\QQ$ together with a map $\psi : \HH\rightarrow \curve(\CC)$
from the upper half space $\HH\subset \CC$
with the following property.
There exists a positive integer $N$
such that for every function $f\in \QQ(\curve)$, the function
$f\circ \psi$ is a modular function for $\Gamma(N)$
with all $q$-expansion coefficients
in $\QQ^{\mathrm{ab}}$.

We identify $f$ with $f\circ \psi$
and we identify $\psi$ with the induced morphism of curves
$X(N)\rightarrow C$.
\end{definition}

For an order $\mc{O}$ in an imaginary quadratic number field $K$, 
we denote by $K_{\mc{O}}$ the associated ring class field.
Let $f$ be a modular function and $\tau\in\HH$ imaginary quadratic,
say a root of $aX^2+bX+c$ for coprime integers $a,b,c$.
The pair $(f,\tau)$ is called a \emph{class invariant} for the imaginary
quadratic order $\mc{O}=\ZZ[a\tau]$ if $f(\tau)$ lies in the ring class field~$K_{\mc{O}}$.
The \emph{discriminant} $D$ of the class invariant is the
discriminant of $\mc{O}$.
The Galois group $G$ of $K(f(\tau))/K$ is isomorphic
via the Artin map
to a quotient of the Picard group $\Cl(\mc{O})$.
Associated to a class invariant is its minimal polynomial over~$K$,
also known as the \emph{class polynomial},
\begin{equation*}
\Ff := \prod_{\sigma\in G}\big(X-\sigma(f(\tau))\big)\quad\in K[X].
\end{equation*}
Under additional restrictions, class polynomials
can sometimes be shown to have coefficients in $\QQ$ (cf.\ \cite[Thm.\ 4.4]{enge-morain2014}, 
\cite[Thm.\ 5.4]{enge-streng}); in that case we call the class polynomials \emph{real}.
Oftentimes, a modular function admits class invariants for an infinite family of discriminants, determined by a certain congruence condition
(\cite{schertz2002weber}, \cite[Thm.\ 4.3]{enge-morain2014}).
Sometimes the discriminant uniquely determines the class polynomial for a given modular function.

\begin{example}
The modular $j$-function admits a unique class polynomial for any discriminant $D<0$, called the \emph{Hilbert class polynomial} $H_D[j]:=H_\tau[j]$.
It can be seen as a function on $\PP^1$ whose zeros are
the $j$-invariants of elliptic curves with CM by the imaginary
quadratic order of discriminant $D$ and whose poles are restricted to the point at infinity.
\end{example}

We propose a generalization of class polynomials, seen as functions on modular curves of higher genus, for which the classical class polynomials can be viewed as the genus zero case. We will mostly restrict ourselves to the case of genus one, as this will make notation considerably less complicated. We discuss the arbitrary genus case in Section~\ref{sec:general}.
Let $C$ be a modular curve over $\QQ$ with a smooth Weierstrass model
$y^2+a_1xy+a_3y=x^3+a_2x^2+a_4x+a_6$, and suppose that $(x,\tau),(y,\tau)$ are class invariants for some imaginary quadratic $\tau\in\HH$.
Consider $G = \Gal(K(x(\tau),y(\tau))/K)$ and $m=\#G$.
If we denote by $\mc{D}$ the divisor of the unique point at infinity of $C$, then $\mc{L}(\infty \mc{D})$ has a basis $b_0=1,b_1=x,b_2=y,b_3=x^2,b_4=xy,b_5=x^3,b_6=x^2y,\ldots$ (ordered by ascending degree).
There exist $a_i\in K$, not all zero, such that
\begin{equation}\label{eq:lineareqn}
\sum_{i=0}^m a_i b_i(\tau)=0.
\end{equation}
In fact, up to scaling by an element of $K^{\times}$, there exists a unique function $F_{\tau}[C]=\sum_{i=0}^m a_ib_i\in K(C)$ such that
\begin{equation}\label{eq:defF}
\div F_{\tau}[C] = \left[\sum_{\sigma\in G} \left(\sigma(\psi(\tau))\right)\right]+\left(-\sum_{\sigma\in G}\sigma(\psi(\tau))\right)-(m+1)\mc{D}.
\end{equation}
\begin{definition}\label{def:genclasspoly1}
We call $F_{\tau}[C]$ as in \eqref{eq:defF}
a \emph{generalized class function} for~$\tau$.
The associated \emph{generalized class polynomial}
is the unique $H_{\tau}[C]\in K[X,Y]$ of degree $\leq 1$ in $Y$
such that $H_{\tau}[C](x,y) = F_{\tau}[C]$.
\end{definition}
We note that the
polynomial
$H_{\tau}[C]$ depends
on the choice of $x$ and~$y$, but we leave this out of the notation.
In Section~\ref{sec:general} (and
in particular Definition~\ref{def:genclasspoly2})
we will allow more general
divisors~$\mc{D}$
and bases~$\mc{B}$,
leading to more general
functions $F_{\tau}[C,\mc{B}]$ and polynomials
$H_{\tau}[C,\mc{B}]$.

\begin{definition}
We call the point
$P = \sum_{\sigma \in G}\sigma(\psi(\tau))\in C(K)$
the \emph{Heegner point} of the class function $F$.
\end{definition}
If the Heegner point $P$ is the point at infinity, then $a_m=0$.
Otherwise, the point $-P$ is a zero of~$F$.
In particular, if $P=-(0,0)$, then $a_0=0$.

For $N\in\ZZ_{>0}$, we denote by $\xnulN$ the smooth,
projective, geometrically irreducible curve over $\QQ$ with function field consisting of the modular functions for the modular group
$\Gamma^0(N)=\{\begin{psmallmatrix}
a & b\\
c & d
\end{psmallmatrix} \in \SL_2(\ZZ)\mid b\equiv 0 \pmod{N}\}$
that have rational $q$-expansion.
We denote by $\xplusN$ the quotient of $\xnulN$ by the
Fricke-Atkin-Lehner involution
%$z\mapsto -(Nz)^{-1}$, % This is the one for X_0
$z\mapsto -N/z$, % This is the one for X^0
and write $\eta(z)$ for the Dedekind $\eta$-function
\begin{equation*}
\eta(z) = q^{1/24}\prod_{n=1}^{\infty}(1-q^n),
\quad\mbox{where}\quad
q = \exp(2\pi i z).
\end{equation*}

\begin{example}\label{example:main}
Consider the genus one modular curve $C:=\xplus{119}$.
Its conductor as an elliptic curve is~$17$
(Cremona label \href{https://www.lmfdb.org/EllipticCurve/Q/17/a/4}{17a4})\footnote{One way to deduce this is as follows.
	Using the command \texttt{J0(119).decomposition()} in SageMath~\cite{sage}
	one finds that $C$ has conductor~$17$.
	For each of the Weierstrass models of the now finitely many possible curves~\cite{lmfdb},
	there are finitely many options for the divisor of the function $\mathfrak{w}_{7,17}$
	given by \eqref{eq:defw}. The curve $C$ has two rational CM points
	(both of discriminant $-19$), so given a possible Weierstrass model
	together with a possible divisor for $\mathfrak{w}_{7,17}$, one
	can first determine $\mathfrak{w}_{7,17}$ as a function of the
	Weierstrass coordinates $x,y$ by evaluating in one CM point, and then determine whether it has the expected value in the other CM point. This process excludes all but one of the options, and we at once in fact deduce both the Weierstrass model \eqref{eq:weierstrass} and the relation between $\mathfrak{w}_{7,17}$ and $x$ and $y$ \eqref{eq:relwxy}.}.
A Weierstrass model for $E$ is given by\footnote{We note that a slightly ``simpler'' Weierstrass model $v^2+uv+v=u^3-u^2-u$ exists by taking $u=x$ and $v=-y-2x$, but the given model \eqref{eq:weierstrass} turns out to yield slightly better practical reduction factors (see Section~\ref{ssec:x0119}).}
\begin{equation}\label{eq:weierstrass}
y^2+3xy-y=x^3-3x^2+x,
\end{equation}
where $x,y\in\QQ(C)$ have respective $q$-expansions
\begin{eqnarray*}
x &=& q^{-2} + q^{-1} + 1 + q + 2q^2 + 2q^3 + 3q^4 + 3q^5 + 4q^6 + 5q^7 + \ldots,\\
y &=& q^{-3} + 1 + 2q + 2q^2 + 4q^3 + 4q^4 + 7q^5 + 9q^6 + 12q^7 +\ldots,\\
& & \mbox{where this time $q = \exp(2\pi i z/119)$.}
\end{eqnarray*}
The ``double eta quotient'' $\mathfrak{w}_{7,17}$ given by
\begin{equation}\label{eq:defw}
\mathfrak{w}_{7,17}(z)
=\frac{\eta(z/7)\eta(z/17)}{\eta(z)\eta(z/119)}
\end{equation}
is invariant under the action of $\Gamma^0(N)$ \cite[Thm.\ 1]{newman1}
and the Fricke-Atkin-Lehner involution \cite[Thm.\ 2]{enge-schertz2005}, hence also forms an element of the (rational) function field of $C$. It is related to $x$ and $y$ by
\begin{equation}\label{eq:relwxy}
\mathfrak{w}_{7,17}=-y+x^2-x.
\end{equation}
The curve $\xplus{119}$ has two cusps, and they are both rational.
In the given Weierstrass model,
these correspond to the point $(0,0)$ and the point at infinity.
Numerical examples of generalized class polynomials specifically for $\xplus{119}$ are given in Section~\ref{ssec:x0119}.
We will treat this curve as our main test case in the rest of the paper.
\end{example}
		
\section{Estimates and reduction factors}\label{sec:reductionfactors}

\subsection{Reduction factors}

We define  the \emph{reduction factor}
of a modular curve $\curve$
to be
\begin{equation}\label{eq:reductiondef}
	r(\curve) = \frac{\deg(j : X(N)\rightarrow \PP^1)}{\deg(\psi : X(N) \rightarrow \curve)}.
\end{equation}
In the case $\curve =\PP^1$, we denote this number also by~$r(\psi)$
and our notation and terminology coincide with 
that of~\cite{broker-stevenhagen-constructing}.
The number $r(\psi)^{-1}$ is denoted by
$\widehat{c}(\psi)$
in \cite{enge-morain2002}
and by $c(\psi)$ in~\cite{enge-morain2014}.
Br\"oker and Stevenhagen \cite[Theorem~4.1]{broker-stevenhagen-constructing}\footnote{The
arXiv version v1 of \cite{broker-stevenhagen-constructing} has weaker bounds than the final
publication and needs to be combined with \cite[Appendix~2]{Kim} to get the same result.}
show $r(\psi)\leq 32768/325 \leq 100.83$.
Under Selberg's eigenvalue conjecture,
one can even prove $r(\psi)\leq 96$.
The best known $\psi$ achieves $r(\psi) = 72$.
This result does not however apply directly to $r(\curve)$.
For example, we have
\begin{equation}\label{eq:reductionx0}
	r(\xnulN) = N\prod_{p\mid N} (1+\frac{1}{p})
\quad\mbox{and}\quad
r(\xplusN) = \frac{1}{2} r(\xnulN)
\quad\mbox{if}\quad N>1.
\end{equation}
Our main example $\curve = \xplus{119}$ therefore achieves
$r(C) = \frac{1}{2}(7+1)(17+1)=72$. For (hyper)elliptic modular curves $\curve$
we get $r(C) \leq 201.65$
(or $r(C) \leq 192$ 
under Selberg's eigenvalue conjecture), by applying the bounds to the $x$-function.
Surprisingly, all elliptic curve quotients of $\xnulN$
we found so far have $r\leq 72$ (Section~\ref{sec:searchelliptic}).
In Section~\ref{sec:general} we will discuss
higher-genus curves, which allow for unbounded
$r(\curve)$.

\begin{remark}\label{rmk:besidesrc}
In the applications we have in mind, 
the reduction factor is the main source of improvement in computational efficiency.
It is important to note, however, that this number $r(\curve)$ does not tell the complete story,
even in the ``classical'' setting ($\curve\cong\PP^1$), for example for the following reasons.
\begin{enumerate}
\item There are many challenges when computing class polynomials, and even more with
generalized class polynomials. See Section~\ref{sec:runtime}.
\item In the CM method (Section~\ref{sec:apply}), we will want to find a $j$-invariant in $\FF_p$ from a point in $C(\FF_p)$.
This is done using the minimal polynomial of the $j$-function over $\QQ(C)$,
known as the~\emph{modular polynomial} (Lemma~\ref{lem:modularpolynomials}).
This works best if the degree of $j$ over $\QQ(C)$ is small.
For example, this degree is $1$ for $C = \xnulN$, is $2$ for $C = \xplusN$,
and ranges from $1$ to $20$ in \cite[Table~7.1]{enge-morain2014},
making $\xplus{119}$ a good choice in this respect.
\item If the (generalized) class polynomial is not real, then its coefficients lie in an imaginary quadratic extension of $\QQ$; roughly doubling its bit size.
This issue can be avoided by imposing additional restrictions on $\curve$ or $\tau$,
see Sections \ref{ssec:x0nramified} and \ref{ssec:x0nplus}.
\end{enumerate}
On the other hand, there are two important tricks that may be used in complementary directions,
providing computational improvements beyond the reduction factor $r(\curve)$:
\begin{enumerate}
\item Under some constraints, typically when all primes dividing the level of the modular curve ramify in the CM field, both the degree and height of the class polynomial are cut in half. This happens for example in the
record-computation of \cite{enge-sutherland2010} for the Atkin invariant $A_{71}$ when $71$ divides the discriminant,
leading to class polynomials that are $2^2\cdot 36 = 144$ times smaller than the Hilbert class polynomial (note that the reduction factor $r(A_{71})$ is $36$ in this case).
The same trick also applies to generalized class polynomials, see Section \ref{ssec:x0nplusramified},
which in the case of $\xplus{119}$ leads to a factor $2^2\cdot 72 = 288$ in size reduction.
\item When the class number is composite, one can decompose the ring class field into a tower of fields whose defining polynomials have smaller degrees, also leading to a significant speed-up in the CM method \cite{sutherland2012crt}.
\end{enumerate}
These last two tricks only work when the class number is composite. We expect both of them to work
well for generalized class polynomials,
so will mainly restrict to the case of prime class number in our examples,
as this more clearly illustrates the role of the parameter $r(\curve)$.
\end{remark}
The goal of the rest of this section is to show under some hypotheses
that the reduction factor $r(\curve)$ is indeed an asymptotic reduction
factor of the size of the polynomials involved.
For that, we will first introduce
the appropriate notions of ``size''.

\subsection{Measures of polynomials and heights of their roots}
For a polynomial $A\in \CC[X]$,
let $|A|_1$ (resp.~$|A|_\infty$) be the sum (resp.~maximum) of the
absolute values of the coefficients of~$A$.
The \emph{Mahler measure} of a
polynomial $A = a\prod_{i=1}^{n} (X-\alpha_i)\in\CC[X]$ is
$$\mathcal{M}(A) = |a| \prod_{i} \max\{1,|\alpha_i|\}.$$
\begin{lemma}\label{lem:polymeasures}
	We have
\begin{align*}
	|A|_\infty \quad &\leq \quad |A|_1 \quad \leq \quad (n+1)|A|_\infty,\\
	\mathcal{M}(A)\quad &\leq \quad |A|_1 \quad \leq \quad 2^n \mathcal{M}(A),
\end{align*}
\begin{align*}
	\left|\log|A|_1 - \log|A|_\infty \right| &\quad \leq\quad  \log(n+1),\\
	\left|\log|A|_\infty - \log(\mathcal{M}(A)) \right| &\quad \leq\quad  n\log(2).
\end{align*}
\end{lemma}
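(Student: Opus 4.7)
The plan is to establish the two pairs of multiplicative inequalities in the order listed, then obtain the two logarithmic statements by taking logarithms and, in the second case, combining both directions.

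For the pair involving $|A|_1$ and $|A|_\infty$, both bounds are immediate from the definitions: the maximum of the $n+1$ coefficient-absolute-values is at most their sum, and that sum consists of at most $n+1$ non-negative terms each bounded by $|A|_\infty$. For the Mahler-measure pair, I would handle the two directions separately. The lower bound $\mathcal{M}(A) \leq |A|_1$ I would deduce from Jensen's formula, which gives
\[
\log \mathcal{M}(A) \;=\; \int_0^1 \log\bigl|A(e^{2\pi i t})\bigr|\,dt;
\]
since the triangle inequality yields $|A(z)| \leq |A|_1$ on $|z|=1$, integration produces the claim. For the upper bound $|A|_1 \leq 2^n \mathcal{M}(A)$, I would expand $A = a\prod_{i=1}^n (X-\alpha_i)$, express the coefficient of $X^k$ as $\pm a\cdot e_{n-k}(\alpha_1,\ldots,\alpha_n)$ for the elementary symmetric polynomial $e_{n-k}$, and use the term-by-term bound $|e_j(\alpha_1,\ldots,\alpha_n)| \leq \binom{n}{j}\prod_i \max(1,|\alpha_i|)$. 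Summing over $k$ and invoking $\sum_{k=0}^n\binom{n}{k}=2^n$ produces the desired inequality.

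The two logarithmic inequalities then follow from the multiplicative ones. The first is simply the logarithm of the first pair. For the second, $\bigl|\log|A|_\infty - \log \mathcal{M}(A)\bigr|\leq n\log 2$, I would chain the bounds in both directions: combining $|A|_\infty \leq |A|_1$ with $|A|_1 \leq 2^n\mathcal{M}(A)$ yields one direction, while $\mathcal{M}(A) \leq |A|_1 \leq (n+1)|A|_\infty$ combined with the elementary estimate $n+1 \leq 2^n$ (valid for $n\geq 1$, with $n=0$ being trivial since $A$ is then a constant) yields the other. The only step requiring any real bookkeeping is the elementary-symmetric-polynomial estimate for $|A|_1 \leq 2^n\mathcal{M}(A)$; everything else is either by definition or by a direct application of Jensen's formula, so no significant obstacle is expected.
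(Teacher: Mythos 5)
Your proof is correct, and the overall architecture (prove both multiplicative pairs, then take logarithms) matches the paper's. There are two points of divergence worth noting. For the upper bound $|A|_1 \leq 2^n\mathcal{M}(A)$ you expand via elementary symmetric polynomials and sum $\binom{n}{j}$, whereas the paper exploits the submultiplicativity $|AB|_1 \leq |A|_1|B|_1$: writing $A = a\prod_i(X-\alpha_i)$ gives $|A|_1 \leq |a|\prod_i(1+|\alpha_i|) \leq |a|\prod_i 2\max\{1,|\alpha_i|\} = 2^n\mathcal{M}(A)$. These are essentially the same computation (the binomial coefficients reappear if you expand the product), but the paper's version is a one-liner once you observe that $|X-\alpha|_1 = 1+|\alpha| \leq 2\max\{1,|\alpha|\}$, avoiding the bookkeeping with $e_j$. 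For $\mathcal{M}(A)\leq|A|_1$ you supply the Jensen's formula argument where the paper simply cites Mahler's 1960 paper (which uses the same route), so that is a matter of exposition rather than substance. Finally, your handling of the second logarithmic inequality is actually more careful than the paper's terse ``then take logarithms'': you correctly identify that the second multiplicative pair compares $\mathcal{M}(A)$ to $|A|_1$ rather than $|A|_\infty$, so one needs to bridge via $|A|_\infty\leq|A|_1$ in one direction and $|A|_1\leq(n+1)|A|_\infty \leq 2^n|A|_\infty$ (using $n+1\leq 2^n$ for $n\geq 1$) in the other. This is precisely the filler the paper leaves implicit.
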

\begin{proof}
The first two inequalities are by definition and the third is
Equation (6) of~\cite{mahler1960}.
For its converse, observe that we have
$|AB|_1 \leq |A|_1|B|_1$, and hence also
$	|A|_1 \leq |a|  \prod_i \max\{2, 2|\alpha_i|\} \leq 2^n\mathcal{M}(A).$
Then take logarithms.
\end{proof}

For an element $\alpha$ in a number field~$L$ of degree~$n$,
we define its \emph{(absolute logarithmic) height} to be
$$h(\alpha) = \frac{1}{n}\sum_{v} \max\{0,\log|\alpha|_v\},$$
where the sum ranges over the Archimedean
and non-Archimedean absolute values, suitably
normalized (that is, those denoted $||\cdot||_v$
in \cite[\S B.1]{hindry-silverman}). If $\alpha$ is a root of an irreducible $A\in\ZZ[X]$ of degree~$n$,
then we have
\begin{equation}\label{eq:mahler}
	\log(\mathcal{M}(A)) = n h(\alpha).
\end{equation}

\begin{remark}\label{rem:bitsize}
Another measure for the complicatedness of $A$ would be its total
bit size, or the sum $s$ of the logarithms of the absolute values of the nonzero coefficients.
We will instead focus on $|A|_\infty$ for the following reasons.

First of all, for computational purposes, it is more useful to look at $p = \deg(A)\cdot \log|A|_\infty$,
as the required precision (or number of primes with the CRT approach) is proportional to $\log|A|_\infty$
and the number of computations to do with that precision is proportional to $\deg(A)$.

Secondly, we get the impression from numerical computations that
$s$ is close to~$p$. For example, the value of $s/p$ is spread out over the interval $(0.75,  0.9)$ for the larger
discriminants in both Section~\ref{ssec:x0119} and Example~\ref{example:otherbasis}.

Finally, it is hard to prove lower bounds on $s$ other than $s\geq \log|A|_\infty$,
as it seems to already be hard to show that a sufficient proportion of coefficients is nonzero.
\end{remark}

\subsection{Proof of the height reduction}

\begin{theorem}\label{thm:reductionhyper}
	Let $C$ be a modular curve over $\QQ$
	and suppose that $C$ is an elliptic curve
	of rank $0$
	with Weierstrass coordinates $x$ and~$y$.
	Suppose that $\tau\in\HH$ ranges over a sequence of imaginary quadratic points
	for which $C$
	yields real generalized class polynomials~$\HC$, and with
	\begin{equation}\label{eq:limithypothesis}
		\frac{h(j(\tau))}{\log(\log(\#\Cl(\mathcal{O})))}\rightarrow \infty.
	\end{equation}
	Scale each $\F$ such that it has coprime coefficients in~$\ZZ$.
	Then
	$$
	d\cdot \frac{\log|\F|_\infty}{\log|\Fj|_\infty}\rightarrow \frac{1}{r(\curve)},$$
where $d$ is the degree of $K_{\mc{O}}$ over $K(\psi(\tau))$.
\end{theorem}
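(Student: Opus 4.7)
The plan is to estimate $\log|H_\tau[j]|_\infty$ and $\log|H_\tau[\curve]|_\infty$ separately in terms of canonical heights on $\PP^1$ and $\curve$, and then compare using the ratio of degrees of the maps $j,x:\curve\to\PP^1$ (namely $r(\curve)$ and~$2$). For the Hilbert polynomial this is immediate: since $H_\tau[j]$ has as roots the Galois conjugates of $j(\tau)$ over $K$, the Mahler-measure identity~\eqref{eq:mahler} together with Lemma~\ref{lem:polymeasures} gives
$$\log|H_\tau[j]|_\infty = h\cdot h(j(\tau)) + O(h),$$
where $h=\#\Cl(\mc{O})$; by hypothesis~\eqref{eq:limithypothesis} the $O(h)$ term is negligible compared to the main term.

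The core of the proof is the analogous estimate
$\log|H_\tau[\curve]|_\infty \sim m\cdot \hat h(\psi(\tau))$,
where $\hat h$ is the Néron--Tate height on $\curve$ (normalized so that $h(x(P))=2\hat h(P)+O(1)$) and $m=h/d$. For the lower bound I would form the norm
$$N := N_{K(\curve)/K(X)}(H_\tau[\curve]) = \alpha^2 - \alpha\beta(a_1 X + a_3) - \beta^2(X^3+a_2X^2+a_4X+a_6)\in\ZZ[X]$$
of $H_\tau[\curve]=\alpha(X)+\beta(X)Y$. Its degree is $m+1$, its leading coefficient is essentially $a_m^2$, and its roots are the $x$-coordinates of the $m+1$ zeros of $F_\tau[\curve]$ on $\curve$, namely $\sigma(x(\tau))$ for $\sigma\in G$ together with $x(P)$, where $P=\sum_\sigma\sigma(\psi(\tau))$. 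Since $\curve$ has rank zero, $P$ is torsion, so $\hat h(P)=0$ and $x(P)$ contributes only $O(1)$ to $\log\mathcal{M}(N)$. Applying~\eqref{eq:mahler} and Lemma~\ref{lem:polymeasures} to $N$, and combining with the convolution bound $|N|_\infty \leq c\cdot m\cdot|H_\tau[\curve]|_\infty^2$, gives
$$\log|H_\tau[\curve]|_\infty \geq \log|a_m| + \tfrac{m}{2}\,h(x(\tau)) + O(m) = m\hat h(\psi(\tau)) + O(m).$$
For the matching upper bound, I would view $H_\tau[\curve]$ as a hyperplane section of the projective embedding $\curve\hookrightarrow\PP^m$ defined by the basis $b_0,\ldots,b_m$, passing through the $m+1$ images of the zeros of $F_\tau[\curve]$, and bound its height via an arithmetic-Bézout-type identity; alternatively, one can express $F_\tau[\curve]$ explicitly as a product of Weierstrass $\sigma$-functions and exploit their quasi-periodicity to bound the coefficients.

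Granting $\log|H_\tau[\curve]|_\infty \sim m\hat h(\psi(\tau))$, the conclusion follows from the standard height comparisons $h(j(\tau))/\hat h(\psi(\tau))\to r(\curve)$ and $h(x(\tau))/\hat h(\psi(\tau))\to 2$ (coming from the corresponding degrees of $j,x:\curve\to\PP^1$), combined with $m=h/d$:
$$d\cdot\frac{\log|H_\tau[\curve]|_\infty}{\log|H_\tau[j]|_\infty} \sim d\cdot\frac{m\hat h(\psi(\tau))}{h\cdot h(j(\tau))} = \frac{\hat h(\psi(\tau))}{h(j(\tau))} \to \frac{1}{r(\curve)}.$$
The main obstacle is the matching upper bound above; the rank-zero hypothesis on $\curve$ is essential here, as it ensures the ``extra'' Heegner zero $-P$ contributes only $O(1)$ rather than a nontrivial fraction of the main term.
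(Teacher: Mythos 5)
Your overall framework is sound and aligns with the paper: you correctly factor the computation into (i) relating $\log|\Fj|_\infty$ to $h(j(\tau))$, (ii) relating $\log|\F|_\infty$ to a height of $\psi(\tau)$ on $\curve$, and (iii) using functoriality of heights along $j,x\colon\curve\to\PP^1$ to get the factor $r(\curve)$. You also correctly identify that rank zero is used to control the extra Heegner zero. Your lower bound via the norm $N_{K(\curve)/K(X)}(\F)$ is essentially the paper's argument, and your bookkeeping with Mahler measure is right.

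However, the matching upper bound -- which you yourself flag as ``the main obstacle'' -- is where the entire technical content of the proof lives, and you do not supply it. Neither of your two suggested routes is worked out, and neither is obviously adequate. An arithmetic-B\'ezout inequality controls heights of intersection cycles, not of the hyperplane determined by a configuration of points; passing to the dual requires a nontrivial argument, and it is unclear it would give an error term of the right shape. The $\sigma$-function route would need to control a product over the Galois orbit with quasi-periodicity corrections and does not obviously localize to a statement about the coefficients $a_i$ of $\F$ in the Weierstrass monomial basis. The paper instead proves Proposition~\ref{prop:reductionhyper} by a concrete interpolation argument: after normalizing the Weierstrass model so that $g=0$ and $f(z)\leq -1$ for $z\leq 0$, it bounds $|A(z)|$ and $|B(z)|$ for $z\leq 0$ using $A(z)^2 + (-f(z))B(z)^2 = N(\F)$, then reconstructs $A$ and $B$ by Lagrange interpolation at the carefully chosen nodes $g(i) = -\log(ei)^2$, achieving an error of exactly $O(m\log\log m)$. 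This explains why the hypothesis in~\eqref{eq:limithypothesis} involves $\log\log(\#\Cl(\mathcal{O}))$ rather than just $h(j(\tau))\to\infty$; your sketch never engages with why this specific growth condition appears, precisely because the upper bound is missing. Until that step is supplied, the argument is incomplete.
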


\begin{remark}\label{rem:reasonablehypothesis}
	We argue that the hypothesis \eqref{eq:limithypothesis}
	is very reasonable. Under GRH, we have
	\begin{equation}\label{eq:littlewood}
		\#\Cl(\mc{O}) = O(\sqrt{|D|}\log(\log|D|)),
	\end{equation}
    where $D$ is the discriminant of~$\mc{O}$
    (see \cite[9.Theorem~1 and 11.~on page~371]{littlewood1928}, suitably
    extended to arbitrary~$D$.)
	%% KEEP IN COMMENTS:
	% Write D = F^2 * D0 with D0 fundamental.
	% If D0 < -4, then 11 on page 371 of littlewood1928 gives #Cl(O_K) = sqrt(|D0|)*pi^-1 L_D0(1).
	% Cox - Primes of the form x^2+ny^2 Theorem 7.24 gives
	% #Cl(O) <= #Cl(O_K) *  \#(O_K / F O_K)^*/ \#(ZZ / F ZZ)^* <= #Cl(O_K) * F \prod_{p|F} (1 + chi_D0(p) / p),
	% so altogether we get
	% #Cl(O) <= sqrt(|D|) * pi^-1 L_D(1).
	% Theorem 1 of littlewood1928, assuming the RH for L_D, gives L_D(1) = O(log(log(|D|))),
	% so this proves the result for D0 < -4.
	% If D0 >= -4, then Cl(O_K) is trivial, so using the Cox reference again, we get
	% #Cl(O) <= F \prod_{p|F} (1 + chi_D0(p) / p) <= F \sum_{n <= F} n^-1 = O(F log(log(F)))
	% by Mertens's  Theorem. Then note that F < sqrt|D|.
	Moreover,
	\cite[\S 6.2]{enge-morain2002} gives the approximation
	$\log|\Fj|_\infty\approx \pi\sqrt{|D|} S(D)$,
	with $S(D) = \sum_Q a^{-1}$,
	where the sum ranges over
	reduced primitive quadratic forms $Q = ax^2+bxz+cz^2$
	of discriminant~$D$.
	We now give a heuristic lower bound of this sum on average over all $|D|\leq X$.
	We have $\sum_D S(D) \approx \sum_{Q} a^{-1}$,
	where this time the sum is taken over all reduced
	quadratic forms of negative discriminant $>-X$ (using the heuristic that imprimitive forms have a negligible contribution).
	As we are only computing a lower bound, we may restrict
	to $a \leq \sqrt{X/8}$.
	Then $b$ ranges from $-a$ to $a$,
	and $c$ ranges from $a$ or $a+1$ to $\lfloor(X+b^2)/(4a)\rfloor$;
	a range that contains at least $\lfloor X/(8a)\rfloor$
	integers.
	This yields at least roughly $X/4$ values of $b$ and $c$
	for each $a$, hence
	$\sum_D S(D)$ is roughly at least
	$(X/4)\sum_{a^2\leq X/8} a^{-1} \geq \frac{1}{8} X\log(X)$.
	% This uses the fact that sum{a^-1 for a from 1 to n} <= 1 + integral(a^-1 for a from 1 to n) and e^2<8.
	It follows that the average $S(D)$ is at least proportional to $\log|D|$.
	Thus, for ``average'' $S(D)$, we have that
	$\log|\Fj|_\infty$ is at least proportional to $\sqrt{|D|}\log|D|$.
	Combined with \eqref{eq:littlewood}, \eqref{eq:mahler}, and Lemma~\ref{lem:polymeasures},
	we find for such $D$
	that $h(j(\tau))/\log(\log(\#\Cl(\mc{O})))$ is at least proportional
	to $\log|D| / (\log(\log|D|))^2$.
	We thus see that \eqref{eq:limithypothesis}
	indeed holds for ``average'' $S(D)$.
\end{remark}

Theorem~\ref{thm:reductionhyper} is the analogue of the following
result.

\begin{theorem}[cf.\ Enge-Morain~\cite{enge-morain2002}]
	\label{thm:reductionx}
	Let $f$ be a modular function and suppose that $\tau\in\HH$ ranges
	over a sequence of imaginary quadratic points for which $(f,\tau)$ is a class
	invariant with $h(j(\tau))\rightarrow \infty$.
	Then
	$d\cdot \frac{\log|\Ff|_\infty}{\log|\Fj|_\infty}\rightarrow \frac{1}{r(f)}$,
	where $d$ is the degree of $K_{\mc{O}}$ over $K(f(\tau))$.
\end{theorem}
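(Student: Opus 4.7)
The plan is to convert the ratio of sizes $\log|\Ff|_\infty/\log|\Fj|_\infty$ into a ratio of heights $h(f(\tau))/h(j(\tau))$ via Lemma~\ref{lem:polymeasures}, and then to show via the Weil height machinery on $X(N)$ that this height ratio tends to $\deg(f)/\deg(j)=1/r(f)$.

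For the first step, I would combine Lemma~\ref{lem:polymeasures} with \eqref{eq:mahler}, after scaling $\Ff$ (and, if it is not already $\QQ$-rational, passing to the $\QQ$-minimal polynomial $\Ff\cdot\overline{\Ff}$) so as to work with coprime-integer-coefficient polynomials. Writing $n_j=\#\Cl(\mc{O})=\deg\Fj$ and $n_f=\#\Cl(\mc{O})/d=\deg\Ff$, this yields
\[
\log|\Fj|_\infty = n_j\cdot h(j(\tau)) + O(n_j), \qquad \log|\Ff|_\infty = n_f\cdot h(f(\tau)) + O(n_f),
\]
with absolute implicit constants. Since $n_j = d\cdot n_f$, the quantity $d\cdot\log|\Ff|_\infty/\log|\Fj|_\infty$ reduces to $(h(f(\tau)) + O(1))/(h(j(\tau)) + O(1))$, so it suffices to prove that $h(f(\tau))/h(j(\tau))\to 1/r(f)$.

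For this height ratio, I would view $f$ and $j$ as morphisms $X(N)\to\PP^1$ of respective degrees $\deg(f)$ and $\deg(j)$. Fixing any height function $h_{X(N)}$ on $X(N)$ associated to a degree-$1$ ample divisor, the standard Weil-height machinery (cf.\ \cite[\S B.3]{hindry-silverman}) yields
\[
h(j(P)) = \deg(j)\cdot h_{X(N)}(P) + O(\sqrt{h_{X(N)}(P)}), \qquad h(f(P)) = \deg(f)\cdot h_{X(N)}(P) + O(\sqrt{h_{X(N)}(P)}),
\]
uniformly for $P\in X(N)(\overline{\QQ})$. Applied to any lift of $\tau$, these in particular force $h_{X(N)}(P)\to\infty$ (equivalently $h(f(\tau))\to\infty$) whenever $h(j(\tau))\to\infty$, and taking the ratio produces $\deg(f)/\deg(j)=1/r(f)$ in the limit.

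The main obstacle is the bookkeeping of error terms: Lemma~\ref{lem:polymeasures} only compares $\log|\cdot|_\infty$ with $\log\mathcal{M}$ up to an $O(n)$ term, which is dominated by the main term $n\cdot h(\alpha)$ precisely when $h(\alpha)\to\infty$. One therefore has to first verify that $h(f(\tau))$, and not merely $h(j(\tau))$, tends to infinity; this is a byproduct of the Weil-height estimate above together with the finiteness of $r(f)$, so the argument closes without circularity. A secondary nuisance is the non-real case, where $\Ff\in K[X]\setminus\QQ[X]$: multiplying by the Galois conjugate $\overline{\Ff}$ doubles both $n_f$ and $\log|\cdot|_\infty$ symmetrically, so this manoeuvre does not affect the limiting ratio.
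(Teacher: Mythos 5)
Your proposal is correct and follows essentially the same route as the paper: the paper likewise converts $\log|\Fj|_\infty$ and $\log|\Ff|_\infty$ to $h(j(\tau))$ and $h(f(\tau))$ via Lemma~\ref{lem:polymeasures} and~\eqref{eq:mahler}, and then cites \cite[Proposition~B.3.5(b)]{hindry-silverman} (the curve-case comparison of heights for divisors of different degree, which is precisely the $O(\sqrt{h})$ estimate you wrote out) to conclude $h(f(\tau))/h(j(\tau))\to 1/r(f)$. The extra remarks you make about the non-real case and about deducing $h(f(\tau))\to\infty$ are sound but are implicit in the paper's terser presentation.
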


The goal of the remainder of Section~\ref{sec:reductionfactors}
is to prove Theorem~\ref{thm:reductionhyper}.
We start with a proof of Theorem~\ref{thm:reductionx}.
\begin{proof}
	Let $m$ be the degree of $K(f(\tau))$ over $K$
	and let $n = dm$ be the degree of $K_{\mc{O}}$ over~$K$.
	By Lemma~\ref{lem:polymeasures} and~\eqref{eq:mahler},
	we get $|\frac{1}{n}\log|\Fj|_\infty - h(j(\tau))|\leq \log(2)$
	and $|\frac{d}{n}\log|\Ff|_\infty - h(f(\tau))| \leq \log(2)$.
	
	As $h(j(\tau))\rightarrow \infty$, we also get
	\begin{equation}\label{eq:limit}
	\frac{h(f(\tau))}{h(j(\tau))} \rightarrow \frac{1}{r(f)}
	\end{equation} by \cite[Proposition B.3.5(b)]{hindry-silverman}.
	Altogether, this gives the result.
\end{proof}

\newcommand{\wase}{{d'}}

\begin{proposition}\label{prop:reductionhyper}
	Let $C$ be a modular curve over $\QQ$
	and suppose that $C$ is an elliptic curve
	of rank $0$
	with Weierstrass coordinates $x$ and~$y$.
	For every imaginary quadratic $\tau\in\HH$ for which $C$
	yields a real generalized class polynomial~$\HC$,
	let~$m$ be the degree of $K(\psi(\tau))$ over~$K$ and let $\wase\in\{1,2\}$ be the degree of $K(\psi(\tau))/K(x(\tau))$.
	Scale each $\F$ such that it has coprime coefficients in~$\ZZ$. Then we have 
\begin{equation*}
\left| \log |\F|_\infty - \frac{\wase}{2}\log |\Fx|_\infty\right| < B\max\{1,m\log(\log(m))\},
\end{equation*}
    for some constant $B$ that only depends on $C$ and the choice of Weierstrass model.
\end{proposition}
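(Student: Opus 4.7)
Writing $\HC(X,Y)=A(X)+B(X)Y$, I split on the value of $d'\in\{1,2\}$.

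When $d'=2$, the Galois orbit $\{\sigma\psi(\tau):\sigma\in G\}$ is closed under the involution $\iota\colon P\mapsto-P$ on $C$, and (by a simple count of distinct $x$-values) contains no $2$-torsion. Pairwise cancellation in the group law therefore forces the Heegner point $P=\sum_\sigma\sigma(\psi(\tau))$ to equal $O$, so \eqref{eq:defF} reduces to $\div F_\tau[C]=\sum_\sigma[\sigma\psi(\tau)]-m\mc{D}=\div(\Fx(x))$. Hence $F_\tau[C]=\lambda\,\Fx(x)$ for some $\lambda\in K^\times$, forcing $B\equiv 0$ and, after normalisation, $\HC=\pm\Fx$. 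The proposition holds with difference $0$ in this case.

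For $d'=1$, the plan is to reduce to $\Fx$ via the norm of $F_\tau[C]$ along $\iota$:
\begin{equation*}
N(F):=F_\tau[C]\cdot(F_\tau[C]\circ\iota)=A^2-(a_1X+a_3)AB-(X^3+a_2X^2+a_4X+a_6)B^2.
\end{equation*}
Combining $\div F_\tau[C]$ from \eqref{eq:defF} with $\iota^*\div F_\tau[C]=\sum_\sigma[-\sigma\psi(\tau)]+[P]-(m+1)\mc{D}$ and the identity $\div(\Fx(x))=\sum_\sigma([\sigma\psi(\tau)]+[-\sigma\psi(\tau)])-2m\mc{D}$ (valid when $d'=1$) yields $\div N(F)=\div(\Fx(x))+\div(X-x(P))$, so $N(F)=c\cdot\Fx(x)\cdot(X-x(P))$ for some $c\in K^\times$, with $c\in\QQ^\times$ under the real hypothesis. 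The rank-zero assumption is used crucially here: $P\in C(\QQ)_{\mathrm{tors}}$ lies in a finite set depending only on $C$, so $|x(P)|=O_C(1)$ independently of $\tau$.

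Set $M:=|\HC|_\infty$. A coefficient-wise expansion gives the upper bound $|N(F)|_\infty\leq K_1\,m\,M^2$ for a constant $K_1$ depending only on the Weierstrass coefficients, while Lemma~\ref{lem:polymeasures} together with multiplicativity of the Mahler measure applied to $N(F)=c\cdot\Fx(x)\cdot(X-x(P))$ yields
\begin{equation*}
\log|N(F)|_\infty=\log|c|+\log|\Fx|_\infty+O(m).
\end{equation*}
A matching lower bound $\log|N(F)|_\infty\geq 2\log M-O(m)$ — delicate because of possible cancellation among the three terms of $N(F)$ — I would prove via a Mahler-measure comparison between the bivariate $\HC$ and the univariate $N(F)$, exploiting that $N(F)$ is the norm of $\HC$ over $\QQ(x)$ under $\iota$. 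Combining the two directions gives $2\log M=\log|\Fx|_\infty+\log|c|+O(m)$.

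The main obstacle is then the bound $\log|c|=O(m\log\log m)$. Matching leading coefficients on both sides of $N(F)=c\cdot\Fx(x)\cdot(X-x(P))$ pins down $c=\pm a_m^2/h_{n_0}$, where $a_m$ is the top coefficient of $F_\tau[C]$ in the basis $(b_i)$ and $h_{n_0}$ is the leading coefficient of the integer-primitive $\Fx$. I would realise the $a_i$ as $m\times m$ minors of the evaluation matrix $(b_j(\sigma_i\psi(\tau)))$ from \eqref{eq:lineareqn} and bound $|a_m|$ and $h_{n_0}$ in terms of heights of the CM points via Hadamard-type estimates; cumulative class-number-theoretic estimates on these heights, in the spirit of Remark~\ref{rem:reasonablehypothesis}, are what I expect to produce the $m\log\log m$ slack in the final bound, this being the step that genuinely forces the shape of the error term in the statement.
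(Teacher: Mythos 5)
Your plan correctly identifies the central algebraic fact (after taking the norm along the hyperelliptic involution, the bivariate class function collapses to $\Fx^{d'}$ times a bounded factor controlled by the torsion Heegner point), and your observation that the case $d'=2$ forces $P=O$ and $B\equiv 0$ is a tidy shortcut the paper does not bother to isolate. But there are two genuine gaps in the $d'=1$ case, and in both of them you have in fact mislocated where the difficulty lies.

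First, the ``matching lower bound'' $\log|N(F)|_\infty\geq 2\log M-O(m)$ is exactly the hard step, and ``Mahler-measure comparison between the bivariate $\HC$ and the univariate $N(F)$'' will not produce it: the Mahler measure of a bivariate polynomial does not control the max-norm of its coefficients in a way that survives the potential cancellation between $A^2$, $(a_1X+a_3)AB$, and $fB^2$. The paper's device is entirely different: it first normalizes the Weierstrass model to $y^2=f(x)$ with $f\in\ZZ[X]$ monic of odd degree satisfying $f(z)\leq -1$ for all real $z\leq 0$, so that on the negative real axis $N(F)(z)=A(z)^2+(-f(z))B(z)^2\geq\max\{A(z)^2,B(z)^2\}$ with \emph{no cancellation possible}, and then reconstructs $A$ and $B$ from their values at the carefully spaced nodes $z_i=-\log(ei)^2$ by Lagrange interpolation. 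This interpolation --- not the constant $c$ --- is the source of the $m\log\log m$ slack, and omitting it means the proposal has no route to the upper bound on $|\HC|_\infty$.

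Second, the proposed route to $\log|c|=O(m\log\log m)$ via Hadamard bounds on the $m\times m$ minors of $(b_j(\sigma_i\psi(\tau)))$ is circular: those entries have absolute values on the order of $\exp(h(j(\tau)))$, so a Hadamard estimate on $a_m$ (and on $h_{n_0}$) scales with the very quantity $h(j(\tau))$ that the proposition is trying to compare against, and the ratio $a_m^2/h_{n_0}$ is not controlled this way. The paper instead shows directly that, after the normalization above, the primitive integer relation $N(\F)=s\,\Fx^{d'}\cdot T$ (with $T=b^2X-a$ drawn from the finite torsion set) has $s=\pm 1$: if a prime $p$ divided $s$, then the highest-weight term of $\F\bmod p$ (weights $2$ for $X$ and $\deg f$ for $Y$) would contribute a nonzero highest-degree term to $N(\F)\bmod p$, contradicting $p\mid N(\F)$. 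So $|c|$ is in fact $O_C(1)$, uniformly in $\tau$, and contributes only to the $O(m)$ part of the error, not to the $m\log\log m$ part.
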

\begin{proof}
	\textbf{We first put the equation for $C$ in a nice form.}
	We have $C : y^2 + g(x)y = f(x)$.
    Without loss of generality we have $g=0$ and $f\in\ZZ[X]$ monic of odd degree
    such that $f(z)\leq -1$ for all real $z\leq 0$.
    Indeed, we obtain $g=0$ by the substitution $y' = y+\frac{1}{2}g(x)$,
    then do scalings $x' = vx$ and $y'=wy$ to make $f$ integral and (thanks to its odd degree) monic,
    and then do a substitution $x' = x+c$ to make $f(z)\leq -1$ for all $z\leq 0$.
    This affects $\F=A+BY$ and $\Fx$ as follows.
    The first substitution changes $A$ into $A+\frac{1}{2} g(X)B$,
    the second changes $A$ into $A(vX)$ and $B$ into $wB(vX)$,
    and the third changes $A$ into $A(X+c)$.
    Each of these substitutions change $\log(\max\{|A|_1,|B|_1\})$
    at most by~$O(m)$, as does clearing the denominators afterwards.
	
	\textbf{Next, we relate a norm of $\F$ to $\Fx$.} The extra elliptic curve point
	$(a/b^2,c/b^3) := \sum_{\sigma\in G} \sigma(\psi(\tau))\in C(\QQ)$
	from \eqref{eq:defF} (which is minus the Heegner point)
	is torsion by our assumption that 
	$\curve$ has rank~$0$.
	There are finitely many torsion points in $\curve(\QQ)$,
	hence finitely many possibilities for the polynomial
	$T = b^2X-a$.
	Writing $\F = A(X) + B(X)Y$, we get that $N(\F) = A(X)^2 + (-f(X))B(X)^2$
	has the same divisor as the primitive polynomial
	$\Fx^\wase\cdot T$,
	hence there is a constant $s\in\ZZ\setminus \{0\}$
	with $N(\F) = s \Fx^\wase\cdot T$.
	
	We claim that $s = \pm 1$.
	If not, take a prime $p\mid s$ and consider the highest-weight term of
	$(\F\bmod p)$, where $X$ has weight $2$ and $Y$ has weight $\deg(f)$.
	This gives rise to the highest-degree term of $(N(\F)\bmod p)$, which is therefore
	nonzero, a contradiction.
	
	\textbf{Now we use interpolation to bound $\F$ in terms of $\Fx$.}
	We will choose interpolation points $z= g(i) \leq 0$. Note that
	for $z\leq 0$ we have
	$$A(z)^2, B(z)^2 \leq A(z)^2 + (- f(z))B(z)^2 = N(\F)
	\leq \max\{1,|z|\}^m |\Fx|^e_1 |T|_1,$$
	and since there are finitely many polynomials $T$, we get
	$$\log|A(z)|, \log|B(z)| \leq \frac{m}{2} \max\{0,\log|z|\} + \frac{\wase}{2} \log|\Fx|_1 + O(1).$$
	
	Interpolation then gives, for $P \in\{A,B\}$:
	\begin{equation}\label{eq:interpolation}
	P(X) = \sum_{i=1}^{k} P(g(i)) \prod_{j\not=i} \frac{X-g(j)}{g(i)-g(j)},
	\end{equation}
    where $k = \deg(P)+1 = O(m)$.
	
	Taking $g(u) = -\log(eu)^2$, we find $|g(i)-g(j)| \geq |i-j|\min_{z\in[1,k]} |g'(u)|
	= |i-j|\min_{u\in[1,k]} 2\frac{\log(eu)}{u} = 2|i-j|\frac{\log(ek)}{k}$.
	So for each $i$ there are at most $k/\log(k)$ values of $j\not=i$ with $|g(i)-g(j)| < 1$
	and each of them has $|g(i)-g(j)| \geq 1/k$.
	We get 
	$$\log\prod_{j\not=i} \frac{1}{|g(i)-g(j)|}
	\leq (k/\log(k)) \log(k) = k = O(m).$$
For the other factors in \eqref{eq:interpolation},
	we have $\log|X-g(j)|_1 \leq \log(1 + \log(em)^2) = O(\log(\log(m)))$,
	so $\log \prod_{j} |X-g(j)|_1 = O(m\log(\log(m)))$,
	as well as $\log |P(g(i))| \leq \frac{\wase}{2}\log|\Fx|_1 + O(m\log(\log (m)))$.
	Taking the sum in \eqref{eq:interpolation}
	gives another $+\log(k)$, so that the end result is
	$\log |P(X)|_1 \leq \frac{\wase}{2}\log|\Fx|_1 + O(m\log(\log ( m)))$.
	By Lemma~\ref{lem:polymeasures}, this also holds with $|\cdot|_\infty$,
	which proves the upper bound on $\log|\F|_\infty$.
	
	For the lower bound, note that $\Fx^\wase$ is a factor of $Q =A^2-f(X)\cdot B^2$,
	and we have $|Q|_1 \leq |A|_1^2 + |f|_1 |B|_1^2 \leq |f|_1(m+1)^2|\F|_\infty^2$.
	Using the fact that $\mathcal{M}$ is multiplicative by definition
	and is related to $|\cdot|_1$ and $|\cdot|_\infty$ by Lemma~\ref{lem:polymeasures},
	we get exactly what we need:
	$\wase\log|\Fx|_\infty \leq  \wase\log \mathcal{M}(\Fx) + O(m)
	\leq \log\mathcal{M}(Q) + O(m)
	\leq \log|Q|_1 + O(m) \leq 2\log(|\F|_\infty) + O(m)$.
\end{proof}

\begin{proof}[Proof of Theorem~\ref{thm:reductionhyper}]
Denote again by $n=\#\Cl(\mc{O})$ the degree of $K_{\mc{O}}$ over $K$.
	First we apply Theorem~\ref{thm:reductionx} to~$x$ and get
	$d\wase\frac{\log|\Fx|_\infty}{\log|\Fj|_\infty}\rightarrow \frac{2}{r(C)}$.
	Proposition~\ref{prop:reductionhyper}, together with the hypothesis
	$h(j(\tau))/(n\log(\log(n)))\rightarrow \infty$, gives
	$\frac{1}{\wase}\frac{\log|\F|_\infty}{\log|\Fx|}\rightarrow\frac{1}{2}$
	(as in the proof of Theorem~\ref{thm:reductionx}).
	The product of these two limits gives the result.
\end{proof}

\begin{remark}\label{rem:changemodels}
	Theorem \ref{thm:reductionhyper} states
	that asymptotically the effect of the choice of a model of the curve $\curve$ is negligible,
	as is the effect of replacing $f$ by $2f$ or $f+1$ or any other element of $\QQ(f)$
	in Theorem~\ref{thm:reductionx}.

	However, in practice the error terms can be quite large and depend on these choices.
	For example, if $f$ is integral over $\ZZ[j]$ then $\Ff$ is monic, and
    if $f^{-1}$ is integral over $\ZZ[j]$, then $f$ has zero constant coefficient.
    This can make a difference in practical examples as it forces the coefficients at the beginning and end to be small,
     though this improvement is negligible asymptotically by the theorems.
     See also Remark~\ref{rem:bitsize}.
\end{remark}

\section{\texorpdfstring{Class invariants for $\xnulN$ and $\xplusN$}{Class invariants for X0(N) and X0+(N)}}\label{sec:x0}

In this section we assume that $C$ is a quotient over~$\QQ$ of $\xnulN$;
in other words, $C$ is a smooth, projective, geometrically irreducible
curve over $\QQ$ with function field
consisting only of modular functions for $\Gamma^0(N)$
that have rational $q$-expansion.
We will show how to obtain generalized class functions for
every discriminant $D<0$ that is square
modulo~$4N$ (Section~\ref{ssec:x0n}).

In some cases we get further reductions from class invariants
generating subfields of $K_{\mc{O}}$ or from
real class polynomials (Sections~\ref{ssec:x0nramified}--\ref{ssec:x0nplusramified}).

In Sections~\ref{ssec:x0119}--\ref{ssec:comparison} we study what this means for $\xplus{119}$
and in Section~\ref{sec:searchelliptic} we look for more examples of elliptic curve quotients of~$\xnulN$.

\subsection{\texorpdfstring{Class invariants for $\xnulN$}{Class invariants for X0(N)}}
\label{ssec:x0n}

The following result does not require $C$ to
be an elliptic curve, except that (unless $C$ is an elliptic curve)
one needs to read
the definitions in Section~\ref{sec:general}
for the parts about generalized class polynomials.
\begin{proposition}[based on Schertz~\cite{schertz2002weber}]\label{prop:Nsystem}
Let $C = (C, \psi)$ be a quotient over~$\QQ$ of $\xnulN$
and let $D<0$ be a square modulo~$4N$.

There exist $a,b,c\in\ZZ$ with $a,c>0$,
$b^2-4ac = D$, $N\mid c$, and $\gcd(a,N) = \gcd(a,b,c) = 1$.
Choose such $a, b,c$,
let $\tau\in\HH$ be a root of $aX^2 + bX + c$,
with order
$\mc{O} = \ZZ[a\tau]$, which
has discriminant~$D$.
Then we have $$\psi(\tau)\in C(K_{\mathcal{O}}),$$
thus giving rise to a generalized class polynomial $H_{\tau}[C]$.

The Galois orbit of $\psi(\tau)$ can be computed
as follows.
There exists an \emph{$N$-system}, that is,
there exist $\tau_1,\ldots, \tau_n\in\HH$ such that
$(\tau_i\ZZ+\ZZ)_i$ is a system of representatives of $\Cl(\mc{O})$
and such that $\tau_i$ is a root of $a_iX^2 + b_iX + c_i$
with $\gcd(a_i,N) = \gcd(a_i,b_i,c_i)=1$ and
$b_i\equiv b\ \mathrm{mod}\ 2N$.
Moreover, for any such choice, we have
$$\Gal(K_{\mc{O}}/K)\cdot \psi(\tau) = \{\psi(\tau_i) : i=1,\ldots,n\}.$$ 
\end{proposition}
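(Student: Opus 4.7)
The plan is to reduce the proposition to the main theorem of Schertz~\cite{schertz2002weber}, which supplies exactly this rationality and Galois-orbit statement for functions on $\xnulN$, and then push the result forward along the quotient map $\xnulN\to \curve$.

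First, existence of $(a,b,c)$. Since $D$ is a square modulo $4N$, pick $b\in\ZZ$ with $b^2\equiv D\pmod{4N}$ and set $a=1$, $c=(b^2-D)/4$. Then $a,c>0$ (using $D<0$), we have $b^2-4ac=D$, $N\mid c$, $\gcd(a,N)=1$, and primitivity is automatic because $a=1$. The order $\ZZ[a\tau]$ then has discriminant $D$.

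Next, rationality of $\psi(\tau)$. Schertz's theorem asserts that for every modular function $f$ for $\Gamma^0(N)$ whose $q$-expansion lies in $\QQ$ and every $\tau$ arising from a primitive triple $(a,b,c)$ of discriminant $D$ with $N\mid c$ and $\gcd(a,N)=1$, the value $f(\tau)$ lies in $K_{\mc{O}}$. Since $\QQ(\curve)$ is by assumption a subfield of the field of such modular functions, every rational function on $\curve$ takes a value in $K_{\mc{O}}$ at $\psi(\tau)$, hence $\psi(\tau)\in \curve(K_{\mc{O}})$ and the generalized class polynomial $H_{\tau}[\curve]$ of Definition~\ref{def:genclasspoly1} is defined.

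Finally, the $N$-system and the Galois orbit. For each class in $\Cl(\mc{O})$ we must exhibit a primitive representative $(a_i,b_i,c_i)$ of discriminant $D$ with $\gcd(a_i,N)=1$ and $b_i\equiv b\pmod{2N}$; the condition $N\mid c_i$ is then automatic from $b_i^2\equiv b^2\equiv D\pmod{4N}$ together with $\gcd(a_i,N)=1$. That $a_i$ may be chosen coprime to $N$ inside each class is standard (every proper $\mc{O}$-ideal class contains an ideal of norm coprime to any prescribed integer). The shift $b_i\mapsto b_i+2a_ik$ corresponds to an $\SL_2(\ZZ)$-translation of the form, and, since $b-b_i$ is automatically even (both have the parity of $D$), the congruence $a_ik\equiv (b-b_i)/2\pmod{N}$ can be solved thanks to $\gcd(a_i,N)=1$; this arranges $b_i\equiv b\pmod{2N}$. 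Schertz's description of the Galois action then says that the Artin image of the class of $(a_i,b_i,c_i)$ sends the image of $\tau$ in $\xnulN(\CC)$ to the image of $\tau_i$; pushing this forward along $\psi$ yields the orbit formula. The main technical input is Schertz's theorem itself, to which we appeal; all the remaining steps are elementary manipulation of binary quadratic forms and subfield function fields.
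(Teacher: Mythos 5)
Your overall route is the same as the paper's: construct an explicit triple $(a,b,c)$ with $a=1$, reduce to Schertz's rationality and Galois‐action statement for modular functions for $\Gamma^0(N)$ with rational $q$‐expansion, and then push forward along $\QQ(\curve)\hookrightarrow\QQ(\xnulN)$. Your sketch of the existence of an $N$-system (choose an ideal of norm coprime to $N$ in each class, then adjust $b_i$ by $2a_ik$ using $\gcd(a_i,N)=1$) is fine, if somewhat more explicit than the paper, which simply cites Schertz's Proposition~3.

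The gap is in your appeal to Schertz's Theorem~4. You assert that it applies to \emph{every} modular function for $\Gamma^0(N)$ with rational $q$-expansion, but as actually stated it carries an additional hypothesis on the behaviour of $f(1/z)$ (equivalently, at the other cusp, i.e.\ under the Fricke involution). For an arbitrary element $f$ of $\QQ(\curve)$ this hypothesis is not automatic, so one cannot invoke Schertz verbatim. The paper handles this by citing Theorems~3.9 and~4.4 of Enge--Streng, which show that the extra condition on $f(1/z)$ is unnecessary. Without that (or an equivalent argument), your deduction ``Schertz's theorem asserts\ldots the value $f(\tau)$ lies in $K_{\mc{O}}$'' is not justified as stated, and the Galois-orbit conclusion inherits the same problem. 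If you either restrict to $f$ satisfying Schertz's cusp condition (which would not cover all of $\QQ(C)$) or add the Enge--Streng input, your proof closes the gap and matches the paper's.
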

\begin{proof}
	For the existence of $a,b,c$, take an arbitrary square root $b$
	of $D$ modulo~$4N$, let $a=1$, and $c= (b^2-D)/4$.
	Then the existence of an $N$-system is \cite[Proposition~3]{schertz2002weber}.
	
	For any $f\in \QQ(C)$,
	Theorem~4 of Schertz~\cite{schertz2002weber}
	states $f(\tau) 
	\in K_{\mc{O}}\cup\{\infty\}$ and gives
	the $\Gal(K_{\mc{O}}/K)$-orbit as
	$\{g(N\tau_i) : i\}$,
	under an additional condition on the function $f(1/z)$.
	However, the condition on $f(1/z)$ is not
	needed, as stated in Theorems 3.9 and~4.4 of~\cite{enge-streng}.
    This proves the result.
\end{proof}

\subsection{Real class polynomials from ramification}
\label{ssec:x0nramified}

There are some situations in which we can actually get real
class polynomials, cutting the total required bit size in half.
The first such situation is when all primes dividing $N$
ramify.

\begin{proposition}[based on Enge-Morain~\cite{enge-morain2014}]
	\label{prop:realramified}
	Let $C = (C, \psi)$ be a quotient over~$\QQ$ of $\xnulN$
	and let $D<0$ be a discriminant divisible by $N$ if $N$ is odd
	and by $4N$ if $N$ is even.
	
	There exist $a,b,c\in\ZZ$ with $a,c>0$,
	$N\mid b,c$, $\gcd(a,N)=1$, and $b^2-4ac = D$.
	Choose such $a, b,c$,
	let $\tau\in\HH$ be a root of $aX^2 + bX + c$,
	with order
	$\mc{O} = \ZZ[a\tau]$, which
	has discriminant~$D$.
	
Then the $\Gal(K_{\mc{O}}/K)$-orbit of $\psi(\tau)$
is stable under complex conjugation,
and hence we may take $H_{\tau}[C]\in \QQ[X,Y]$.
\end{proposition}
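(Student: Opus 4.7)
The plan is to describe the Galois orbit of $\psi(\tau)$ via an $N$-system, and then show that the ramification hypothesis makes this orbit stable under complex conjugation; real coefficients then follow from descent.

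First I would construct $a,b,c$ explicitly by taking $a=1$, with $b\in\{0,N\}$ chosen so that $b^2\equiv D\pmod 4$, and $c=(b^2-D)/4$; the divisibility assumptions on~$D$ ensure $c\in\ZZ$ with $N\mid c$, while $N\mid b$ holds by construction. Setting $\tau$ to be the root of $aX^2+bX+c$ in $\HH$, the order $\ZZ[a\tau]=\ZZ[\tau]$ indeed has discriminant~$D$, and Proposition~\ref{prop:Nsystem} produces an $N$-system $\tau_1,\ldots,\tau_n$ representing $\Cl(\mc{O})$, with each $\tau_i$ a root of $a_iX^2+b_iX+c_i$ satisfying $b_i\equiv b\pmod{2N}$, and with $\Gal(K_{\mc{O}}/K)\cdot\psi(\tau)=\{\psi(\tau_i)\}$.

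Next I would observe that $-\overline{\tau_i}$ is a root of $a_iX^2-b_iX+c_i$, and that the ramification hypothesis $N\mid b$ forces $-b\equiv b\pmod{2N}$, so the family $(-\overline{\tau_i})_i$ still satisfies the $N$-system congruences with the same $b\bmod 2N$. Since $\ZZ+(-\overline{\tau_i})\ZZ=\overline{\ZZ+\tau_i\ZZ}$ and complex conjugation permutes the ideal classes of $\mc{O}$, this second family also represents $\Cl(\mc{O})$ and is therefore itself an $N$-system. Applying the final assertion of Proposition~\ref{prop:Nsystem} to it yields $\{\psi(-\overline{\tau_i})\}=\{\psi(\tau_i)\}$ as subsets of $C(K_{\mc{O}})$.

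To convert this into a statement about complex conjugation on $C(\CC)$, I would use the rational-$q$-expansion hypothesis built into Definition~\ref{def:modularcurve}: for every $f\in\QQ(C)$ and every $\tau\in\HH$ one has $\overline{f(\tau)}=f(-\overline{\tau})$, so the geometric complex-conjugation involution of $C(\CC)$ (coming from the $\QQ$-structure on $C$) corresponds on $\HH$ to $\tau\mapsto -\overline{\tau}$. Combined with the previous paragraph, the Galois orbit of $\psi(\tau)$ is stable under complex conjugation; the divisor in \eqref{eq:defF} is therefore $\Gal(\overline{\QQ}/\QQ)$-stable, some $K^\times$-multiple of $F_\tau[C]$ lies in $\QQ(C)$, and the corresponding $H_\tau[C]$ lies in $\QQ[X,Y]$. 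The main technical point is pinning down the identification of geometric complex conjugation on $C(\CC)$ with $\tau\mapsto -\overline{\tau}$; once this is in place, everything else rests on the combinatorial coincidence $-b\equiv b\pmod{2N}$ forced by the ramification hypothesis, together with the uniqueness of the orbit description in Proposition~\ref{prop:Nsystem}.
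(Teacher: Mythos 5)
Your proof is correct and follows essentially the same route as the paper's: the same explicit choice $a=1$, $b\in\{0,N\}$, $c=(b^2-D)/4$; the same identification of complex conjugation on $C(\CC)$ with $\tau\mapsto-\overline\tau$ via rationality of $q$-expansions; and the same use of the $N$-system congruence $b_i\equiv b\pmod{2N}$ together with $N\mid b$ to place the conjugate points back in the Galois orbit. The only cosmetic difference is that you verify directly that $(-\overline{\tau_i})_i$ is again an $N$-system representing $\Cl(\mc{O})$, hence $\{\psi(-\overline{\tau_i})\}=\{\psi(\tau_i)\}$, whereas the paper merely observes that one can choose the $N$-system so that $-\overline\tau=\tau_i$ for some~$i$ (leaving implicit that a single coincidence propagates to the full orbit because $\Gal(K_{\mc O}/K)$ is normal in $\Gal(K_{\mc O}/\QQ)$); your version spells out the step the paper leaves to the reader.
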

\begin{proof}
If $D$ is odd, take $b=N$, and if $D$ is even, take $b=0$.
If $N$ is even, then we find $4N\mid b^2-D$.
If $N$ is odd, then we find both
$4\mid b^2-D$ and $N\mid b^2-D$,
hence also $4N\mid b^2-D$.
Let $a = 1$ and $c = (b^2-D)/4$.

The complex conjugate of $\psi(\tau)$
is $\psi(-\overline{\tau})$ by the fact that the $q$-expansion
coefficients are real.
Here $-\overline{\tau}$ is a root of $aX^2 - bX + c$,
and as $N\mid b$, we can choose the $N$-system
in Proposition~\ref{prop:Nsystem} in such a way that
$-\overline{\tau} =\tau_i$ for some~$i$.
This proves the result.
\end{proof}

\subsection{\texorpdfstring{Real class polynomials from $\xplusN$}{Real class polynomials from X0+(N)}}
\label{ssec:x0nplus}

The second situation in which we get real class polynomials
is when working with quotients of $\xplusN$.

\begin{proposition}[based on Theorem~3.4 of Enge-Schertz~\cite{enge-schertz2004}]\label{prop:realplus}
	In the situation of Proposition~\ref{prop:Nsystem},
	suppose furthermore that $C$ is a quotient of $\xplusN$,
	and that $\gcd(c/N,N) = 1$.
	
	Then the $\Gal(K_{\mc{O}}/K)$-orbit of $\psi(\tau)$
	is stable under complex conjugation,
	and hence we may take $H_{\tau}[C]\in \QQ[X,Y]$.
\end{proposition}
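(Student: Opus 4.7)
The plan is to mirror the argument in the proof of Proposition~\ref{prop:realramified}, but to use the Fricke–Atkin–Lehner invariance in place of a direct sign change of $b$. First, because $C$ is defined over~$\QQ$ and the $q$-expansions of elements of $\QQ(C)$ have rational coefficients, complex conjugation sends $\psi(\tau)$ to $\psi(-\overline{\tau})$, exactly as in the proof of Proposition~\ref{prop:realramified}. The goal is therefore to exhibit $-\overline{\tau}$ (up to the Galois action) as one of the points $\psi(\tau_i)$ in some $N$-system.

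The key step is to replace $-\overline{\tau}$ by its Fricke image. Since $C$ is a quotient of $\xplusN$, functions on $C$ are invariant under $z\mapsto -N/z$, so
\[
\psi(-\overline{\tau}) \;=\; \psi\!\left(\tfrac{N}{\overline{\tau}}\right).
\]
Now $\overline{\tau}$ is a root of $aX^2-bX+c$, so the reciprocal substitution shows that $\tau' := N/\overline{\tau}$ is a root of
\[
a' X^2 + b' X + c' \quad\text{with}\quad a' = c/N,\; b' = b,\; c' = aN.
\]
One checks directly that $a',c'>0$, that $N\mid c'$, that $(b')^2 - 4a'c' = b^2 - 4ac = D$, and that $b' \equiv b \pmod{2N}$.

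The remaining arithmetic conditions are where the hypothesis $\gcd(c/N,N)=1$ enters. That hypothesis gives $\gcd(a',N)=1$ immediately. For $\gcd(a',b',c')=1$, suppose a prime $p$ divides all three of $c/N$, $b$ and $aN$. If $p\mid N$ then $p\nmid c/N$ by hypothesis, a contradiction; otherwise $p\nmid N$ forces $p\mid a$, giving $p\mid \gcd(a,b,c)=1$, again a contradiction. Hence $(a',b',c')$ satisfies all the requirements of Proposition~\ref{prop:Nsystem}, so $\tau'$ may be taken as one of the representatives $\tau_i$ in an $N$-system, and consequently $\psi(-\overline{\tau}) = \psi(\tau')$ lies in the Galois orbit $\{\psi(\tau_i)\}$.

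Finally, this stability under complex conjugation propagates to the generalized class polynomial: the Heegner point $\sum_\sigma \sigma(\psi(\tau))$ and its negative are then fixed by complex conjugation and so lie in $C(\QQ)$, so the divisor \eqref{eq:defF} of $F_\tau[C]$ is conjugation-invariant. Since $F_\tau[C]$ is unique up to $K^\times$, a suitable scaling yields coefficients in $\QQ$, and hence $H_\tau[C]\in \QQ[X,Y]$. The main subtlety in this plan is the verification of the $N$-system conditions for $(a',b',c')$: it is precisely here that the coprimality assumption on $c/N$ and $N$ is used, and without it $\tau'$ would not in general fit into any $N$-system.
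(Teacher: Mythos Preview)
Your approach is exactly the paper's: use rational $q$-expansions to get $\overline{\psi(\tau)}=\psi(-\overline{\tau})$, apply the Fricke involution to pass to $\tau'=N/\overline{\tau}$, observe that $\tau'$ satisfies $(c/N)X^2+bX+aN$, and then use $\gcd(c/N,N)=1$ to fit $\tau'$ into an $N$-system. Your added verifications of $\gcd(a',b',c')=1$ and of the descent of $H_\tau[C]$ to~$\QQ$ are fine and simply spell out what the paper leaves implicit.

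One small slip to fix: $\overline{\tau}$ is a root of $aX^2+bX+c$, not of $aX^2-bX+c$ (the polynomial has real coefficients, so complex conjugation permutes its roots). The reciprocal substitution from the correct quadratic still gives $(c/N)X^2+bX+aN$, so your conclusion is unaffected; the intermediate sentence is just misstated.
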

\begin{proof}
	The complex conjugate of $\psi(\tau)$
	is $\psi(-\overline{\tau})$ by the fact that the $q$-expansion
	coefficients are real.
	As $\psi$ is invariant under the Fricke-Atkin-Lehner involution,
	this in turn is $\psi(\tau')$
	with $\tau' = N/\overline{\tau}$,
	a root of $(c/N)X^2 + bX + Na$.
	As $c/N$ is coprime to~$N$, we can choose the $N$-system
	in Proposition~\ref{prop:Nsystem} in such a way that
	$\tau' =\tau_i$ for some~$i$.
	This proves the result.
\end{proof}

To use this result, we will need $\gcd(c/N,N)=1$, which can
be achieved most of the time, as follows.
\begin{lemma}\label{lem:exceptions}
If $D$ is a square modulo~$4N$ and
$D = F^2D_0$ for a negative fundamental discriminant $D_0$
and a positive integer $F$ coprime to~$N$, then
there exist $a,b,c$ as in Proposition~\ref{prop:Nsystem}
with $\gcd(c/N,N) = 1$.

	More generally, let $D<0$ be a square modulo $4N$.
	Then there exist $a,b,c$ as in Proposition~\ref{prop:Nsystem}
	with $\gcd(c/N,N) = 1$
	if and only if all of the following do not hold.
	\begin{enumerate}
		\item there exists a prime $p\mid N$ with $\ord_p(N)$ odd and $\ord_p(D) > \ord_p(4N)$,
		%% Here is an example where it really happens:
		%% p = N = 3, D = -9. Then b^2-4acN = 9, so b^2 - 9 = 12ac, hence 3 | b, hence 3 | ac.
		\item $m:=\ord_2(N) > 0$ and $D$ is of the form $2^{m+1}d$
		with
		$d\equiv 1\ (\mathrm{mod}\ 4)$,
		%% Here is an example where it really happens:
		%% D = -12, N = 2. Then b^2 - 4acN = -12, hence b^2+12 = 8ac, hence b is even,
		%% hence (b/2)^2 + 3 = 2ac, hence b/2 is odd, hence 2ac is divisible by 4.
		\item $m:=\ord_2(N) > 0$ and $D$ is of the form $2^{m}d$
		with
		$d\equiv 1\ (\mathrm{mod}\ 8)$.
	\end{enumerate}
\end{lemma}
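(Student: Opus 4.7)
The plan is to reduce the construction of $(a,b,c)$ to a local condition on $b^2-D$ at each prime $p\mid N$, then handle the primes by CRT. Setting $a=1$ and $c=(b^2-D)/4$ (so $c>0$ is automatic from $D<0$, $\gcd(1,b,c)=1$ is trivial, and $\gcd(a,N)=1$ is trivial), the conditions of Proposition~\ref{prop:Nsystem} together with $\gcd(c/N,N)=1$ become equivalent to asking for $b\in\ZZ$ with $\ord_p(b^2-D)=\ord_p(4N)$ for every $p\mid N$. By CRT, the existence of such a $b$ decouples into the existence of a suitable $b_p\in\ZZ_p$ at each prime $p\mid N$.

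\textbf{Local analysis.} For each $p\mid N$, I would write $b=p^k b'$ and $D=p^v D'$ with $b',D'\in\ZZ_p^\times$. The trichotomy $2k<v$, $2k>v$, $2k=v$ (the last requiring $v$ even) gives $\ord_p(b^2-D)$ equal to $2k$, $v$, or $v+\ord_p(b'^2-D')$ respectively. For odd $p$, the hypothesis that $D$ is a square modulo $p^{\ord_p(N)}$ forces $v$ to be even and $D'$ to be a square modulo~$p$ whenever $v<\ord_p(N)$, so $\ord_p(b'^2-D')$ then attains every nonnegative integer; one checks that the target $\ord_p(N)$ is achievable if and only if $\ord_p(N)$ is even or $v\leq\ord_p(N)$, which is exactly the negation of condition~(1). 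For $p=2$, the same trichotomy is coupled with the finer fact that for $b',D'\in\ZZ_2^\times$ one has $\ord_2(b'^2-D')=1$ iff $D'\equiv 3\pmod 4$, $\ord_2(b'^2-D')=2$ iff $D'\equiv 5\pmod 8$, and any value $\geq 3$ iff $D'\equiv 1\pmod 8$. Cross-referencing with the constraint that $D$ is a square modulo $2^{m+2}$ singles out exactly two additional failure modes beyond condition~(1): namely $v=m+1$ with $D/2^{m+1}\equiv 1\pmod 4$ (condition~(2)) and $v=m$ with $D/2^m\equiv 1\pmod 8$ (condition~(3)); all other $v$ regimes ($v\leq m-1$, $v=m+2$, or $v=m$ or $m+1$ with the complementary congruence on $D/2^v$) are achievable.

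\textbf{First claim from the general one.} To deduce the first claim I would observe that a fundamental discriminant $D_0$ satisfies $\ord_p(D_0)\leq 1$ for odd $p$ and $\ord_2(D_0)\in\{0,2,3\}$, with $D_0=4D_0'$ forcing $D_0'\equiv 2,3\pmod 4$ whenever $\ord_2(D_0)\geq 2$. Combined with $\gcd(F,N)=1$ (so $\ord_p(D)=\ord_p(D_0)$ at $p\mid N$) and $F^2\equiv 1\pmod 8$ for odd $F$, a short case analysis eliminates each of (1), (2), (3) in turn; for instance, condition~(3) with $m=2$ would demand $D/4\equiv D_0'\equiv 1\pmod 8$, contradicting $D_0'\equiv 2,3\pmod 4$, and $m=3$ would force $v=3$ which is incompatible with $D$ being a square modulo~$32$.

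\textbf{Main obstacle.} The trickiest part is the $p=2$ bookkeeping: the three-regime behavior of $\ord_2(b'^2-D')$ on odd $2$-adic units is what produces the two distinct exceptional conditions (2) and (3), and aligning them with the parity constraint imposed on $v$ by $D$ being a square modulo $4N$ requires separately examining the ranges $v<m$, $v=m$, $v=m+1$, $v=m+2$, and $v>m+2$, matching each with the corresponding refined square condition modulo $4$ or $8$.
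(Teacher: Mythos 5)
Your proposal is correct and follows essentially the same approach as the paper: reduce to the condition $\ord_p(b^2-D)=\ord_p(4N)$ for each $p\mid N$, decouple via CRT, and then carry out a prime-by-prime analysis with the finer $2$-adic trichotomy of $\ord_2(b'^2-D')\in\{1\}$, $\{2\}$, or $\geq 3$ according to $D'$ modulo $4$ and $8$. Your local analysis is organized by regimes of $v=\ord_p(D)$ whereas the paper constructs an explicit $b$ (perturbing a square root $b_0$ of $D$ modulo $4N$), and for the first claim you eliminate the exceptions case-by-case under $\gcd(F,N)=1$ whereas the paper packages this as ``each exception forces $\gcd(F,N)>1$'' (observing $v$ is even and $2^{v/2}\mid F$), but these are only organizational differences, not a different route.
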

\begin{proof}
The triple $(a,b,c)$ exists if and only if there exists $b\in\ZZ$
such that for all $p\mid N$: $\ord_p(b^2-D) = \ord_p(4N)$.

Suppose that we are not in case (1), (2), or (3).
By the Chinese remainder theorem, it suffices to find one $b\in\ZZ$
for each $p\mid N$.
So let $p\mid N$ be prime and let $k = \ord_p(4N)$ and $l = \ord_p(D)$.
If $k < l$, then as we are not in case (2), we find that $k$ is even,
and we can take $b = p^{(k/2)}$.
If $k=l$, then we can take $b = p^e$ with $e>k/2$.
Now the case $k>l$ remains.
As $D$ is a square modulo $4N$, there exists
$b_0\in\ZZ$ be such that $D\equiv b_0^2\ (\mathrm{mod}\ 4N)$.
If $\ord_p(b_0^2-D) = \ord_p(4N)$, then we are done,
so suppose $\ord_p(b_0^2-D)>k$.

Note that $2\ord_p(b_0) = l$, hence $l$ is even.
Let $b = b_0 + p^{e}$ with $e$ to be determined later.
We get $b^2-D = (b_0^2-D) + 2p^{e}b_0 + p^{2e}$,
and the terms have valuation $>k$, $e+(l/2)+\ord_p(2)$, $2e$ respectively.

If $p\not=2$, then we choose $e=k-(l/2)$, so $2e = k+(k-l) > k$,
hence $\ord_p(b^2-D) = k$.
If $p=2$ and $k > l+2$, then we choose $e=k-(l/2)-1$, so $2e = k+(k-l-2)>k$,
hence $\ord_p(b^2-D) = k$.

Now only the case $p=2$ with $k-l\in\{1,2\}$ remains.
Write $d = 2^{-l}D$ and $b_1 = 2^{-(l/2)} b_0$, so $b_1$ is odd
and $b_1^2-d$ is divisible by $2^{k-l}$.

In the case $k-l = 1$, we get $b_1^2 - d \equiv 0\pmod{2}$,
and we claim that this is nonzero modulo~$4$.
Indeed, $b_1^2$ is $1$ modulo $4$ and $d$ is not (as we are not in case~(2)).
Therefore $\ord_2(b_1^2-d) = 1$
and $\ord_2(b_0^2-D) = 1+l=k$, so we take $b = b_0$.

In the case $k-l = 2$, we get $b_1^2 - d\equiv 0\pmod{4}$,
and we claim that this is nonzero modulo~$8$.
Indeed, $b_1^2$ is $1$ modulo $8$, and $d$ is not (as we are not in case~(3)).
Therefore $\ord_2(b_1^2-d) = 2$ and $\ord_2(b_0^2-D) = 2 + l = k$, so we take $b = b_0$.

Conversely, suppose that $b$ exists.

In case (1), we have $\ord_p(D)> \ord_p(4N)$, hence
$2\ord_p(b) =\ord_p(4N)$ is odd, contradiction.

In case (2), we have $\ord_2(b^2-2^{m+1}d) = m+2$,
hence $m+1 = 2\ord_2(b)=: 2e$. Write $b = 2^{e}b_1$ and note
$\ord_2(b_1^2-d) = 1$, but $b_1^2-d$ is $0$ modulo $4$.

In case (3), we have $\ord_2(b^2-2^{m}d) = m+2$,
hence $m = 2\ord_2(b)=:2e$. Write $b = 2^eb_1$
and note
$\ord_2(b_1^2-d) = 2$, but $b_1^2-d$ is $0$ modulo $8$.

It remains only to prove the first statement, for which it suffices to show that the exceptions
(1), (2), and (3) all imply $\gcd(N,F)>1$.
In case~(1), we see that $p^2\mid D$ and if $p=2$, then $p^4\mid D$,
hence $p\mid F$.
In cases (2) and~(3), write $D = 2^v d$ with $v\in\{m,m+1\}$.
As $D$ is a square modulo $2^{m+2}$,
we find that $v$ is even, and hence $D = (2^{v/2})^2 d$ for a discriminant~$d$,
so $2\mid F$.
\end{proof}

\begin{lemma}\label{lem:proportions}
	Let $N$ be the product of distinct odd primes
	$p_1,\ldots, p_k$.
	The negative discriminants that are a square modulo $4N$
	and not in one of the exceptions of Lemma~\ref{lem:exceptions}
	have density 
	$$\prod_{i=1}^k \frac{p_i^2 + p_i-2}{2p_i^2}
	$$
	in the set of all negative discriminants.
	
	The negative fundamental discriminants that are a square modulo $4N$
	(which are not in one of the exceptions of Lemma~\ref{lem:exceptions})
	have density 
	$$\prod_{i=1}^k \frac{p_i^2 + p_i-2}{2(p_i^2-1)}$$
	in the set of all fundamental negative discriminants.
\end{lemma}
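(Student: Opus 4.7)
The plan is to reduce the density computation to a prime-by-prime local analysis via the Chinese remainder theorem, and then to count the allowed residues at each odd prime dividing~$N$.

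First I would observe that since $N$ is odd and squarefree, the exceptions~(2) and~(3) of Lemma~\ref{lem:exceptions} are vacuous (both require $\ord_2(N)>0$), while exception~(1) reduces to the existence of some $p\mid N$ with $p^2\mid D$. Every negative discriminant is automatically $\equiv 0$ or $1\pmod{4}$, and hence a square mod~$4$. So ``$D$ is a square mod~$4N$ and avoids the exceptions'' is equivalent, via CRT, to the conjunction over $p\mid N$ of the two local conditions that $D$ is a square mod~$p$ and that $p^2\nmid D$. Each of these depends only on $D\bmod p^2$, and since the $p_i^2$ are pairwise coprime, the global density factors as a product of local densities at the~$p_i$.

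For the first statement, I would count the residues $r\in\ZZ/p^2\ZZ$ satisfying both local conditions. Splitting by $\ord_p(r)$, there are $p(p-1)/2$ residues with $p\nmid r$ that are squares mod~$p$, and $p-1$ residues with $\ord_p(r)=1$ (each is $\equiv 0\pmod{p}$ and hence automatically a square mod~$p$). This gives $(p-1)(p+2)/2$ good residues out of $p^2$, yielding local density $(p^2+p-2)/(2p^2)$, and multiplying over $p\mid N$ gives the first claim.

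For the second statement, I would use that for odd~$p$ every fundamental discriminant has $\ord_p(D)\le 1$, so $p^2\nmid D$ is automatic and only ``$D$ is a square mod~$p$'' remains. The density of $p\mid D$ among fundamental discriminants equals $1/(p+1)$: splitting fundamental discriminants into the two subfamilies $D=d$ with $d$ squarefree and $d\equiv 1\pmod{4}$, and $D=4d$ with $d$ squarefree and $d\equiv 2,3\pmod{4}$, the mod-$p$ behaviour is in each subfamily independent of the mod-$8$ condition (since $p$ is odd), so the density of $p\mid d$ equals the density $1/(p+1)$ of $\ord_p(n)=1$ among squarefree integers~$n$. The complementary proportion $p/(p+1)$ has $D$ coprime to~$p$, of which exactly half are nonzero squares mod~$p$. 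Adding gives local density $1/(p+1)+p/(2(p+1))=(p+2)/(2(p+1))=(p^2+p-2)/(2(p^2-1))$, and multiplying over $p\mid N$ yields the second claim.

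The main obstacle is the rigorous justification of the $1/(p+1)$ density of $p\mid D$ among fundamental discriminants. This reduces to the standard identity $(p-1)/(p^2-1)=1/(p+1)$ for the density of $\ord_p(n)=1$ among integers~$n$ with $\ord_p(n)\le 1$, together with the independence of the mod-$p$ condition from the mod-$8$ conditions and from squarefreeness at primes~$\ne p$. Once this is in hand, the remainder of the argument is routine bookkeeping.
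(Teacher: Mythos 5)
Your proof is correct and follows essentially the same approach as the paper: reduce to a prime-by-prime local count via CRT, counting $(p^2+p-2)/2$ good residues modulo $p^2$ for the first statement, and then dividing out the restriction to fundamental discriminants for the second. The only cosmetic difference is that for the second statement you phrase the local factor as a conditional-density calculation ($1/(p+1)$ chance of $p\mid D$, etc.) rather than as a direct adjustment of the Euler product, but the arithmetic is identical.
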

\begin{proof}
	Being a discriminant is the condition of being $0$ or $1$ modulo~$4$.
	It is equivalent to being a square modulo~$4$.
	This is independent
	of being a square modulo~$p_i$ that does not suffer from~(1),
	which is happens for the $(p_i-1)/2$ residue classes modulo $p_i$
	that are nonzero squares modulo~$p_i$,
	and the $p_i-1$ nonzero residue classes modulo $p_i^2$ that are zero modulo~$p_i$.
	As $p_i(p_i-1)/2 + p_i-1 = (p_i^2+p_i-2)/2$, we get the first statement.
	
	Being a fundamental discriminant means being nonzero
	modulo the squares of all odd primes and being $1, 5, 8, 9, 12, 13$
	modulo $16$.
	This happens for $\zeta(2)^{-1} (1-1/4)^{-1} \frac{6}{16}$ of all negative integers.
	In order to restrict this to products that satisfy the conditions
	of Lemma~\ref{lem:exceptions}, we have to adjust
	the Euler product exactly by the given factor.
\end{proof}
For example, if $N = 119 = 7\cdot 17$, then the numbers in
Lemma~\ref{lem:proportions} are $> 0.2898$
and $19/64 > 0.2968$.

\subsection{Lower-degree class polynomials from ramification}
\label{ssec:x0nplusramified}

In the case where all primes dividing $N$ ramify, we get an even
greater size reduction. The point $\psi(\tau)$
will then be defined over a subfield, cutting the degree of its
minimal polynomial in half. This in turn also cuts the height
of the coefficients of this polynomial in half,
as we get $d\geq 2$ in Theorem~\ref{thm:reductionhyper}.
The amount of work required for computing the class polynomial,
as well as the bit size of the polynomial (Remark~\ref{rem:bitsize}), is related to
the degree times the logarithm of the largest coefficient,
and this product is reduced by a factor $\geq 2\times 2 \times r(C)= 4r(C)$.

\begin{proposition}[based on Enge-Schertz~\cite{enge-schertz2013}]
	\label{prop:ramifiedsubfield}
	Let $C = (C, \psi)$ be a quotient over~$\QQ$ of $\xnulN$
	and let $D=F^2D_0<0$ be such that $N\mid D$,
	$\gcd(F,N)=1$, and $D\not\in\{N,4N\}$.
	
	There exist $a,b,c\in\ZZ$ with $a>0$,
	$N\mid b$, $c=N$, $b^2-4ac = D$,
	and $\gcd(a,b,c) = 1$.
	Choose such $a, b,c$,
	let $\tau\in\HH$ be a root of $aX^2 + bX + c$,
	with order
	$\mc{O} = \ZZ[a\tau]$, which
	has discriminant~$D$.
	
	Let $\mathfrak{n} = ((-b+\sqrt{D})/2, a)$,
	and let $K_{\mc{O}}^{[\mathfrak{n}]}$ be the subfield
	of $K_{\mc{O}}$ fixed by the image of $\mathfrak{n}$ under the
	Artin map.
	Then $[\mathfrak{n}]$ has order $2$ in $\Cl(\mc{O})$
	and $\psi(\tau)\in C(K_{\mc{O}}^{[\mathfrak{n}]})$,
	where $K_{\mc{O}}$ has degree $2$ over
	$K_{\mc{O}}^{[\mathfrak{n}]}$.

	We get $m \leq \#\Cl(\mc{O})/2$ in the definition of~$\F$,
	we get $\F\in\QQ[X,Y]$, and we get
	and $d\geq 2$ in Theorem~\ref{thm:reductionhyper}.
	
	If $\mathfrak{a}_i$ are the ideals $\tau_i\ZZ+\ZZ$
	of an $N$-system, then $\fraka_i$ 
	and $\fraka_i\mathfrak{n}$ yield the same point
	$\psi(\tau_i)$, while $\fraka_i^{-1}$ 
	and $\fraka_i^{-1}\mathfrak{n}$ yield
	$\overline{\psi(\tau_i)}$.
\end{proposition}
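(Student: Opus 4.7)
Writing $b = Nb'$, the equation $b^2 - 4aN = D$ becomes $a = (Nb'^2 - D/N)/4$, so I need to choose $b'$ making this a positive integer coprime to $N$. Using $N\mid D$ together with $D = F^2 D_0$, $\gcd(F,N)=1$, and $D_0$ fundamental, the valuations $v_p(D/N)$ for $p\mid N$ are tightly controlled; a prime-by-prime case analysis (handling $p=2$ separately) yields the required $b'$, and the condition $\gcd(a, b, c) = 1$ follows automatically from $\gcd(a, N) = 1$.

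\textbf{The order and the class of $\mathfrak{n}$.} The element $\omega := a\tau = (-b+\sqrt{D})/2$ satisfies $X^2 + bX + aN = 0$, so $\mc{O} = \ZZ + \ZZ\omega$ has discriminant $D$. The identity $\mathfrak{n}\bar{\mathfrak{n}} = (a)$, which follows from $\omega\bar\omega = aN$ and $\gcd(a, b, N) = 1$, shows $\mathfrak{n} = (a, \omega)$ is an invertible $\mc{O}$-ideal of norm $a$. The crucial algebraic input is the principal ideal identity
\[
\mathfrak{n}\cdot\mathfrak{N} = (\omega),
\]
where $\mathfrak{N} = (N, \omega)$ is the ramified prime above $N$, satisfying $\mathfrak{N}^2 = (N)$. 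Consequently $[\mathfrak{n}] = [\mathfrak{N}]^{-1} = [\mathfrak{N}]$ and $[\mathfrak{n}]^2 = 1$. To rule out $[\mathfrak{n}] = 1$, I assume $\mathfrak{n} = (\alpha)$ with $\alpha = au' + v\omega \in \mathfrak{n}$; then $N(\alpha) = a$ gives $au'^2 - bvu' + Nv^2 = 1$, which a short case analysis (bounding $v$ via $b^2 - 4aN = D < 0$) shows is solvable only in the excluded cases $|D| \in \{N, 4N\}$.

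\textbf{Fixed field via Fricke identification.} Working with an $N$-system $(\tau_i)$ and ideals $\fraka_i = \tau_i\ZZ + \ZZ$, I compute that the Fricke-Atkin-Lehner involution $w_N : \tau_i \mapsto -N/\tau_i$ sends the lattice class $[\fraka_i]$ to $[\fraka_i \mathfrak{N}] = [\fraka_i\mathfrak{n}]$: the lattice of $-N/\tau_i$ is (up to scaling) the ideal $(aN, \omega) = (\omega) = \mathfrak{n}\mathfrak{N}$. Assuming $C$ descends through $\xplusN$ (the implicit scenario in which the proposition is nontrivial), we have $\psi(\tau_i) = \psi(\tau_{i'})$ whenever $[\fraka_{i'}] = [\fraka_i\mathfrak{n}]$. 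On the other hand, the Galois action of $[\mathfrak{n}]$ sends $\psi(\tau_i)$ to $\psi(\tau_j)$ with $[\fraka_j] = [\fraka_i\mathfrak{n}^{-1}] = [\fraka_i\mathfrak{n}]$, matching the Fricke image. Therefore $\sigma_{\mathfrak{n}}$ fixes $\psi(\tau_i)$ on $C$, giving $\psi(\tau) \in C(K_{\mc{O}}^{[\mathfrak{n}]})$, $m \leq \#\Cl(\mc{O})/2$, and $d \geq 2$.

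\textbf{Reality and concluding identifications.} Reality $\F \in \QQ[X,Y]$ follows because complex conjugation maps $\psi(\tau_i) \mapsto \psi(-\bar\tau_i)$, whose lattice has class $[\bar\fraka_i] = [\fraka_i]^{-1}$, still in the Galois orbit. The final two claims --- that $\fraka_i, \fraka_i\mathfrak{n}$ yield the same $\psi(\tau_i)$, and that $\fraka_i^{-1}, \fraka_i^{-1}\mathfrak{n}$ yield $\overline{\psi(\tau_i)}$ --- are restatements of the Fricke-Galois coincidence (applied uniformly in $i$) together with complex conjugation. I expect the identity $\mathfrak{n}\mathfrak{N} = (\omega)$ to be the main obstacle: it simultaneously forces $[\mathfrak{n}]^2 = 1$ and realizes the Fricke involution as the Galois action of $\mathfrak{n}$, so that once this identity is in hand, everything else is bookkeeping.
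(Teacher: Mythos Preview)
The paper's proof is a single-sentence citation to \cite[Theorem~9]{enge-schertz2013}, so your write-up is necessarily different in that it reconstructs the mechanism behind that theorem rather than invoking it. The route you take---the principal-ideal identity $\mathfrak{n}\,\mathfrak{N}=(\omega)$ forcing $[\mathfrak{n}]=[\mathfrak{N}]$ of order dividing~$2$, and the realization of $\sigma_{[\mathfrak{n}]}$ on the CM point as the Fricke involution---is correct and is exactly how one unpacks the cited result. The identity $\mathfrak{n}\,\mathfrak{N}=(\omega)$ is indeed the hinge, as you anticipated.

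One point deserves comment. You explicitly assume that $C$ descends through $\xplusN$, calling this ``the implicit scenario in which the proposition is nontrivial.'' This assumption is in fact \emph{necessary} for the subfield claim: your own computation shows that $\sigma_{[\mathfrak{n}]}$ acts on the CM point of $\xnulN$ via Fricke, hence fixes its image in $C$ precisely when $C$ factors through $\xplusN$. For $C=\xnulN$ itself the conclusion fails (e.g.\ $\sigma_{[\mathfrak{n}]}$ swaps $j(\tau)=j([\mathfrak{n}])$ and $j_N(\tau)=j([\mc O])$). The proposition as stated in the paper says only ``quotient of $\xnulN$,'' which is imprecise; the functions treated in \cite{enge-schertz2013} are Atkin--Lehner invariant, so the hypothesis is present in the cited theorem even if not in the restatement here. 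You were right to flag it rather than gloss over it.

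A minor loose end: your ``short case analysis'' for $[\mathfrak{n}]\neq 1$ is thin. Working instead with $[\mathfrak{N}]$, one needs no $u+v\omega\in\mc O$ of norm~$N$, i.e.\ no integer solution to $(2u-bv)^2+|D|\,v^2=4N$. The hypotheses $N\mid D=F^2D_0$, $\gcd(F,N)=1$, $D_0$ fundamental force $N$ to be squarefree, which kills the $v=0$ branch; the bound $|D|v^2\le 4N$ then leaves only $|D|\le 4N$, and the residual congruence constraints reduce to $|D|\in\{N,4N\}$. This is routine but worth spelling out.
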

\begin{proof}
	This is exactly what we get when applying
	\cite[Theorem~9]{enge-schertz2013}
	to the coordinate functions $f$ of~$C$.
\end{proof}

\subsection{\texorpdfstring{Numerical results for $\xplus{119}$}{Numerical results for X0+(119)}}\label{ssec:x0119}

For the rest of this section we will return to our main Example \ref{example:main}, so set $N=119=7\cdot 17$.
For any $\tau$ as in Proposition~\ref{prop:realplus}, we have
$H_{\tau}[C]\in \QQ[X,Y]$.
By scaling, we may assume that the coefficients of $H_{\tau}[C]$ are integral and coprime,
and that the leading coefficient
(i.e.\ the coefficient of the monomial of highest degree as a function on $C$)
is positive,
and this uniquely determines $H_{\tau}[C]\in\ZZ[X,Y]$.

For any discriminant $D<0$ coprime to $N$ such that $D$ is a square modulo $N$,
there are two generalized class polynomials (depending on the choice of $\tau$).
We experimentally computed both of these for all fundamental discriminants of prime class number $<100$.
The main reason for restricting to prime class number is to exclude the two tricks of Remark \ref{rmk:besidesrc};
for these discriminants,
the reduction factor thus provides a fair comparison
with the Hilbert class polynomial.
The method we employ numerically evaluates class invariants by their $q$-expansions, and finds a minimal polynomial relation \eqref{eq:lineareqn} using lattice basis reduction (LLL).
We leave faster methods for future research, but see Section~\ref{sec:runtime} for the first ideas.
Since the $q$-expansions can only be evaluated up to finite precision, this does not result in provably correct polynomials,
although -- based on heuristic estimates -- they are highly unlikely to be incorrect.

A few examples of computed polynomials are listed in Table \ref{tab:119}.
Here, for the given discriminant $D$,
we consistently chose $\tau$ such that its primitive equation is $X^2+bX+(b^2-D)/4$
with $b\in\ZZ_{>0}$ \emph{minimal} satisfying $b^2\equiv D\pmod{4N}$
and $\gcd((b^2-D)/(4N) ,N) = 1$.

\begin{table}[htbp]
\setlength{\tabcolsep}{3pt}
\renewcommand{\arraystretch}{1.2}
\begin{tabular}{c|c|c}
 $D$  & $n$ & $F_{\tau}[C]$ \\ \hline
 $-52$ & $2$ & $y+1$ \\ \hline
 $-523$ & $5$ & $x^3 + x^2 - 2xy - 3x - 2y$\\ \hline
 \multirow{2}{*}{$-5347$} & \multirow{2}{*}{$13$} & $x^7 + 58 x^6 - 13 x^5 y - 39 x^5 - 143 x^4 y - 85 x^4 - 135 x^3 y$\\
 & &  $- 19 x^3 - 51 x^2 y + 47 x^2 + 7 x y - 12 x - y + 1$ \\ \hline
 \multirow{5}{*}{$-15139$} & \multirow{5}{*}{$29$} & $x^{15} + 1028x^{14} - 40x^{13}y + 37342x^{13} - 10557x^{12}y + 79865x^{12}$\\
 & & $  - 167759x^{11}y - 385199x^{11} - 474165x^{10}y - 425857x^{10} - 69261x^9y$\\
 & &   $+345059x^9 + 493309x^8y + 309689x^8 + 168403x^7y - 132377x^7 $\\
 & & $- 145439x^6y - 22165x^6 - 16029x^5y + 16139x^5 + 15225x^4y - 4867x^4$\\
 & & $ - 7127x^3y - 456x^3 + 623x^2y + 423x^2 + 337xy - 65x - 64y$\\ \hline
\end{tabular}
\\[2mm]
\caption{Some conjecturally correct generalized class functions for $C=\xplus{119}$. The second column lists the class number $n$ of the discriminant $D$.}
\label{tab:119}
\end{table}

Still assuming that $H_{\tau}[C]$ is scaled such that it has coprime coefficients in $\ZZ$, we denote by
\begin{equation*}
r_A(\tau):=\frac{\log|H_{\tau}[j]|_{\infty}}{\log|H_{\tau}[C]|_{\infty}}
\end{equation*}
the \emph{practical}
\emph{reduction factor} of $\tau$.
Under the assumption $h(j(\tau))/\log(\log(n))\to\infty$
for $n=\#\Cl(\mc{O})$
(cf.\ Theorem \ref{thm:reductionhyper}) we have $d^{-1}r_A(\tau)\to r(C)$.
Experimentally obtained practical reduction factors, plotted against both the class number $n$ and
$\log(|H_{\tau}[j]|_{\infty})/\log(\log(n))$,
can be seen in Figure \ref{fig:119h100}. To visualize the role of the class number and
the hypothesis $h(j(\tau))/\log(\log(n))\to\infty$,
the points of higher class number are given a darker color in the second figure. 

\begin{figure}[ht]
\begin{subfigure}[ht]{1\textwidth}
\centering
\includegraphics[scale=0.8]{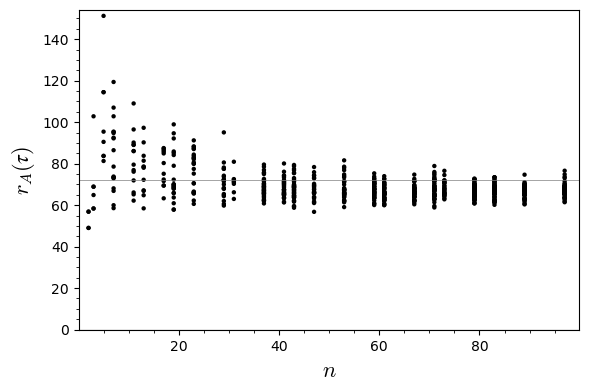}
\end{subfigure}
\vfill
\begin{subfigure}[ht]{1\textwidth}
\centering
\includegraphics[scale=0.8]{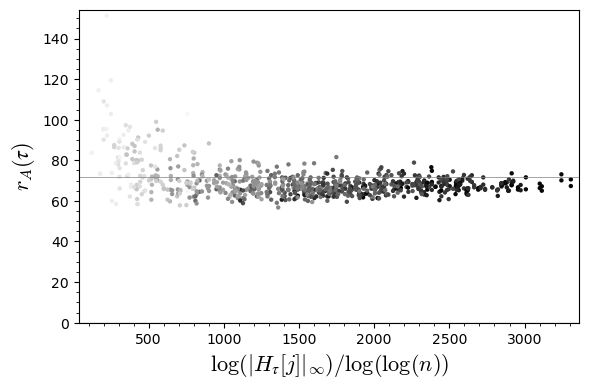}
\end{subfigure}
\caption{Practical reduction factors for $H_{\tau}[\xplus{119}]$ for fundamental discriminants $D$ with $\gcd(D,N)=1$ and prime class number $n<100$.}
\label{fig:119h100}
\end{figure}

The values of the practical reduction factor $r_A(\tau)$ seem to be around their expected asymptotic value $r(C)=72$ (represented by the horizontal grey line), though the convergence is not apparent; especially compared to, e.g.\ some classical class polynomials \cite[Fig.\ 1]{enge-morain2002}. However, in practical applications (see Section \ref{sec:apply}), the class numbers employed are typically several orders of magnitude higher (cf.\ e.g.\ \cite{sutherland2012crt}), so here we expect the speed of convergence not to cause major deviations in expected running times (cf.\ Section \ref{sec:runtime}). For small class numbers, one can in practice even take advantage of this phenomenon by constructing generalized class polynomial with surprisingly good practical reduction factors by selecting a basis of $\mc{L}(\infty\mc{D})$ different from $1,x,y,x^2,xy,\ldots$ (see Example \ref{example:otherbasis}).

\subsection{Comparison with existing class invariants}\label{ssec:comparison}
Real class invariants typically arise subject to congruence conditions on the discriminant.
For example, Weber's functions with reduction factor $72$ are not known to give class invariants for discriminants $\equiv 5\pmod{8}$.
The reduction factors obtained by class invariants coming from the family of (double) eta quotients $\mathfrak{w}_n$ and $\mathfrak{w}_{p,q}$
(such as the Weber function~$\mathfrak{w}_2$, as well as the function $\mathfrak{w}_{7,17}$ of Example \ref{example:main})
have been extensively studied; cf.\ most notably \cite{enge-morain2014}.
These modular functions are not known to yield
class invariants if $D$ is not a square modulo $4n$ or $4pq$.
Hence, to the best of our knowledge, they also are not applicable
to discriminants $\equiv 5\pmod{8}$ as soon as $n$, $p$ or $q$ is even.
Excluding these cases, the (double) eta quotient with highest known reduction factor is $\mathfrak{w}_9$,
with a reduction
factor of $36$ \cite[Table 7.1]{enge-morain2014}.

A less-studied generalization are \emph{multiple eta quotients} \cite{enge-schertz2013},
which are quotients of products of $2^k$ eta functions.
As far as we know these do not yield reduction factors better than $36$ for $k>1$.

The only other known family of ``good'' class invariants
(in the sense that they have large reduction factors)
are the Atkin functions $A_p$
for prime numbers~$p$,
defined to be the smallest-degree functions
in $\mathcal{L}(\infty \D)$,
where $\D$ is the unique cusp of $\xplus{p}$.
The ``best'' known one here is $A_{71}$,
again with a reduction factor of $36$,
owing to the fact
that $\xplus{71}$ has genus zero~\cite[\S3]{enge-sutherland2010}).

The curve $C=\xplus{119}$
has a reduction factor $r(C) = 72$
and yields real class invariants
whenever $D$ is a square modulo $4\cdot7\cdot 17$
and not divisible by $7^2$ or $17^2$.
The set of such $D$ has density $> 28.98\%$
among the set of all negative discriminants
(by Lemma~\ref{lem:proportions}).
Out of these discriminants, one-fourth are $\equiv 5\pmod{8}$.
Hence, for at least $28.98\%\cdot\frac{1}{4}> 7.24\%$
of imaginary quadratic discriminants,
the reduction factor
exceeds the previously best known reduction factors
by a factor of at least two.
% If we restrict to fundamental discriminants, then this last percentage becomes
% 19/64 * 1/3 = 19/192 \approx 9.89583333 \%
% (as 2 of the 6 fundamental discriminants modulo 16 are 5 mod 8).
% This is a bit better than 7.56. Let's keep it in the TeX source.

\begin{remark}
One should note that the above comparison does not
take into account the discussion of Remark \ref{rmk:besidesrc}.
Most importantly, the reduction factor is \emph{not} synonymous
with the true size reduction of the class polynomials.
Indeed, as noted in that remark,
the record-breaking CM construction \cite{sutherland2012crt}
uses the Atkin invariant $A_{71}$ of reduction factor $36$,
because the effective size reduction of class polynomials
is by a factor of roughly $2^2\cdot 36 = 144$ for certain discriminants.
However,
by Section~\ref{ssec:x0nplusramified},
the same trick applies to generalized class polynomials,
leading for $\xplus{119}$ to a size reduction of $2^2\cdot 72 = 288$,
again for a positive density subset of discriminants.
In Figure \ref{fig:bit} we plot the practical reductions in bit size
we found compared to the Hilbert class polynomial using this trick.

\begin{figure}[ht]
\centering
\includegraphics[scale=0.8]{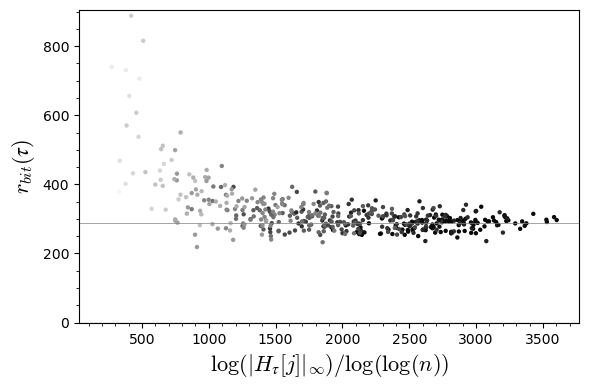}
\caption{Bit-length reduction for $H_{\tau}[\xplus{119}]$ for discriminants $D\equiv 0\pmod{119}$ of class number $n<100$.}
\label{fig:bit}
\end{figure}

\end{remark}

\begin{remark}
	Note that the ``classical'' class polynomial $\Hx$,
	arising from the function $x$ on $\xplus{119}$
	by itself attains a reduction factor of $36$ for the same $28.98\%$ of discriminants.
	This beats all previously-known class invariants for
	a smaller subset ($\approx 1.2\%$) of discriminants: those that additionally are
	non-square modulo both $3$ and~$71$.
	This $x$ can be viewed as a generalisation of the Atkin functions to non-prime levels:
	it is the function of minimal degree in $\mathcal{L}(\infty\D)$ for one
	of the cusps $\D$ of $\xplus{119}$.
	
	Similarly, the degree-two map of the hyperelliptic curve $\xplus{191}$ (not to be confused with $119$) has reduction factor~$48$,
	as observed by David Kohel in the AGC${}^2$T 2021 Zulip group chat.
	This beats the reduction factor~$32$ of the Atkin function $A_{191}$ of degree $3$ on the same curve (see Example~\ref{example:hyperelliptic}).
	
	This shows that the search for generalized class invariants can even uncover new
	``classical'' class invariants.
\end{remark}

\subsection{More modular curves of genus one}\label{sec:searchelliptic}

We searched for more elliptic curves that could be used,
and the results are in Tables \ref{tab:ellipticx0plus}, \ref{tab:ellipticx0}, and~\ref{tab:cremona}.
In our search, we used the fact that $X_0(N)$ is well-studied
and that there is an isomorphism $X_0(N)\rightarrow \xnulN : z\mapsto Nz$.
Surpisingly, we found lots of elliptic curves with
reduction factor $72$ and no elliptic curves
with a greater reduction factor.

In Section~\ref{sec:general}, we will allow
curves of higher genus, which do achieve arbitrarily high
values of~$r(C)$.
Moreover, our search is by no means exhaustive,
as Tables \ref{tab:ellipticx0plus} and~\ref{tab:ellipticx0}
restrict to maps $\phi : X\rightarrow C$ of degree $\leq 2$
and Table~\ref{tab:cremona} only looks at one curve
$X = \xnulN$ per isomorphism class of curves~$C$.
For example, the curve $C = \xplus{119}$ has $r(C) = 72$.
However, in the Cremona database, it is listed as 17a4,
and comes with a modular parametrization
$\phi_{17} : X_0(17)\rightarrow C$ of degree~$1$,
which has $r(\phi_{17}) = 18$. This is why $C$
does not appear in Table~\ref{tab:cremona}.

Finally, the tables are restricted to quotients of~$\xnulN$.
Letting go of $\xnulN$, we find that the genus-one modular curves
$7C^1$, % 84
$8K^1$, % 96
$9H^1$, % 108
$12V^1$, % 96
$15I^1 = X_1(15)$, % 96
$16M^1$, %96
$24J^1$, %96
$27C^1$, % 108
$32E^1$ % 96
in the Pauli-Cummins database~\cite{pauli-cummins2003} all achieve  $r(C) \in\{84, 96, 108\}$.
We have not pursued these curves yet, as Proposition~\ref{prop:Nsystem} does not apply to them.

\begin{table}[ht]
	\[
	\begin{array}{|rl|c|r|r|c|r|}
		\hline
		N & & g(X) & r(X) & \deg(\phi) & g(C) & r(C)\\
		\hline
		119&= 7 \cdot 17 & 1 & 72 & 1 & 1 & 72 \\
		120&= 2^{3} \cdot 3 \cdot 5 & 7 & 144 & 2 & 1 & 72 \\
		144&= 2^{4} \cdot 3^{2} & 5 & 144 & 2 & 1 & 72 \\
		176&= 2^{4} \cdot 11 & 7 & 144 & 2 & 1 & 72 \\
		188&= 2^{2} \cdot 47 & 9 & 144 & 2 & 1 & 72 \\
		131&= 131 & 1 & 66 & 1 & 1 & 66 \\
		75&= 3 \cdot 5^{2} & 1 & 60 & 1 & 1 & 60 \\
		95&= 5 \cdot 19 & 1 & 60 & 1 & 1 & 60 \\
		171&= 3^{2} \cdot 19 & 5 & 120 & 2 & 1 & 60 \\
		54&= 2 \cdot 3^{3} & 1 & 54 & 1 & 1 & 54 \\
		81&= 3^{4} & 1 & 54 & 1 & 1 & 54 \\
		90&= 2 \cdot 3^{2} \cdot 5 & 4 & 108 & 2 & 1 & 54 \\
		108&= 2^{2} \cdot 3^{3} & 4 & 108 & 2 & 1 & 54 \\
		110&= 2 \cdot 5 \cdot 11 & 5 & 108 & 2 & 1 & 54 \\
		135&= 3^{3} \cdot 5 & 4 & 108 & 2 & 1 & 54 \\
		136&= 2^{3} \cdot 17 & 6 & 108 & 2 & 1 & 54 \\
		142&= 2 \cdot 71 & 8 & 108 & 2 & 1 & 54 \\
		159&= 3 \cdot 53 & 4 & 108 & 2 & 1 & 54 \\
		101&= 101 & 1 & 51 & 1 & 1 & 51 \\
		48&= 2^{4} \cdot 3 & 1 & 48 & 1 & 1 & 48 \\
		56&= 2^{3} \cdot 7 & 1 & 48 & 1 & 1 & 48 \\
		63&= 3^{2} \cdot 7 & 1 & 48 & 1 & 1 & 48 \\
		64&= 2^{6} & 1 & 48 & 1 & 1 & 48 \\
		84&= 2^{2} \cdot 3 \cdot 7 & 4 & 96 & 2 & 1 & 48 \\
		96&= 2^{5} \cdot 3 & 3 & 96 & 2 & 1 & 48 \\
		105&= 3 \cdot 5 \cdot 7 & 5 & 96 & 2 & 1 & 48 \\
		124&= 2^{2} \cdot 31 & 6 & 96 & 2 & 1 & 48 \\
		128&= 2^{7} & 3 & 96 & 2 & 1 & 48 \\
		141&= 3 \cdot 47 & 6 & 96 & 2 & 1 & 48 \\
		155&= 5 \cdot 31 & 4 & 96 & 2 & 1 & 48 \\
		191&= 191 & 2 & 96 & 2 & 0 & 48 \\
\hline
	\end{array}\]
	\caption{The curves $X = \xplusN$
	for which there exists a map $\phi : X \rightarrow C$
	of degree $\leq 2$
	with $g(C) \leq 1$
	and $r(C) \geq 48$.
	We used Furumoto-Hasegawa~\cite{furumoto-hasegawa1999}
	and Jeon~\cite%[Theorems 1.1 and~2.10]
{jeon2018}
	to get a complete list.
    }\label{tab:ellipticx0plus}
\end{table}

\begin{table}[ht]
	\[
	\begin{array}{|rl|c|r|r|c|r|}
		\hline
		N & & g(X) & r(X) & \deg(\phi) & g(C) & r(C)\\
		\hline
36&= 2^{2} \cdot 3^{2} & 1 & 72 & 1 & 1 & 72 \\
60&= 2^{2} \cdot 3 \cdot 5 & 7 & 144 & 2 & 1 & 72 \\
72&= 2^{3} \cdot 3^{2} & 5 & 144 & 2 & 1 & 72 \\
92&= 2^{2} \cdot 23 & 10 & 144 & 2 & 1 & 72 \\
94&= 2 \cdot 47 & 11 & 144 & 2 & 1 & 72 \\
49&= 7^{2} & 1 & 56 & 1 & 1 & 56 \\
24&= 2^{3} \cdot 3 & 1 & 48 & 1 & 1 & 48 \\
32&= 2^{5} & 1 & 48 & 1 & 1 & 48 \\
42&= 2 \cdot 3 \cdot 7 & 5 & 96 & 2 & 1 & 48 \\
48&= 2^{4} \cdot 3 & 3 & 96 & 2 & 0 & 48 \\
62&= 2 \cdot 31 & 7 & 96 & 2 & 1 & 48 \\
69&= 3 \cdot 23 & 7 & 96 & 2 & 1 & 48 \\
%18&= 2 \cdot 3^{2} & 0 & 36 & 1 & 0 & 36 \\
%20&= 2^{2} \cdot 5 & 1 & 36 & 1 & 1 & 36 \\
%27&= 3^{3} & 1 & 36 & 1 & 1 & 36 \\
%30&= 2 \cdot 3 \cdot 5 & 3 & 72 & 2 & 0 & 36 \\
%40&= 2^{3} \cdot 5 & 3 & 72 & 2 & 0 & 36 \\
%46&= 2 \cdot 23 & 5 & 72 & 2 & 0 & 36 \\
%21&= 3 \cdot 7 & 1 & 32 & 1 & 1 & 32 \\
%25&= 5^{2} & 0 & 30 & 1 & 0 & 30 \\
%12&= 2^{2} \cdot 3 & 0 & 24 & 1 & 0 & 24 \\
%14&= 2 \cdot 7 & 1 & 24 & 1 & 1 & 24 \\
%15&= 3 \cdot 5 & 1 & 24 & 1 & 1 & 24 \\
%16&= 2^{4} & 0 & 24 & 1 & 0 & 24 \\
%28&= 2^{2} \cdot 7 & 2 & 48 & 2 & 0 & 24 \\
%33&= 3 \cdot 11 & 3 & 48 & 2 & 0 & 24 \\
%19&= 19 & 1 & 20 & 1 & 1 & 20 \\
%37&= 37 & 2 & 38 & 2 & 0 & 19 \\
%10&= 2 \cdot 5 & 0 & 18 & 1 & 0 & 18 \\
%17&= 17 & 1 & 18 & 1 & 1 & 18 \\
%22&= 2 \cdot 11 & 2 & 36 & 2 & 0 & 18 \\
%13&= 13 & 0 & 14 & 1 & 0 & 14 \\
%6&= 2 \cdot 3 & 0 & 12 & 1 & 0 & 12 \\
%8&= 2^{3} & 0 & 12 & 1 & 0 & 12 \\
%9&= 3^{2} & 0 & 12 & 1 & 0 & 12 \\
%11&= 11 & 1 & 12 & 1 & 1 & 12 \\
%7&= 7 & 0 & 8 & 1 & 0 & 8 \\
%4&= 2^{2} & 0 & 6 & 1 & 0 & 6 \\
%5&= 5 & 0 & 6 & 1 & 0 & 6 \\
%3&= 3 & 0 & 4 & 1 & 0 & 4 \\
%2&= 2 & 0 & 3 & 1 & 0 & 3 \\
%1&= 1 & 0 & 1 & 1 & 0 & 1 \\
		\hline
	\end{array}\]
	\caption{The curves $X = \xnulN$
		for which there exists a map $\phi : X \rightarrow C$
		of degree $\leq 2$
		with $g(C) \leq 1$
		and $r(C) \geq 48$
		and $N$ is not already in Table~\ref{tab:ellipticx0plus}.
		We used Ogg~\cite{ogg1974}
		and Bars~\cite{bars1999}
		to get a complete list.
	}\label{tab:ellipticx0}
\end{table}

\begin{table}[ht]
	\[
	\begin{array}{|r|l|r|r|c|r|}
		\hline
		E & N & r(X) & \deg(\phi) & \mathrm{rank}(E) & r(C)\\
		\hline
\href{http://www.lmfdb.org/EllipticCurve/Q/36a1}{\mathrm{36a1}} & 2^{2} \cdot 3^{2} & 72 & 1 & 0 & 72 \\
\href{http://www.lmfdb.org/EllipticCurve/Q/92a1}{\mathrm{92a1}} & 2^{2} \cdot 23 & 144 & 2 & 0 & 72 \\
\href{http://www.lmfdb.org/EllipticCurve/Q/94a1}{\mathrm{94a1}} & 2 \cdot 47 & 144 & 2 & 0 & 72 \\
\href{http://www.lmfdb.org/EllipticCurve/Q/144a1}{\mathrm{144a1}} & 2^{4} \cdot 3^{2} & 288 & 4 & 0 & 72 \\
\href{http://www.lmfdb.org/EllipticCurve/Q/368e1}{\mathrm{368e1}} & 2^{4} \cdot 23 & 576 & 8 & 1 & 72 \\
\href{http://www.lmfdb.org/EllipticCurve/Q/558a1}{\mathrm{558a1}} & 2 \cdot 3^{2} \cdot 31 & 1152 & 16 & 1 & 72 \\
\href{http://www.lmfdb.org/EllipticCurve/Q/704a1}{\mathrm{704a1}} & 2^{6} \cdot 11 & 1152 & 16 & 1 & 72 \\
\href{http://www.lmfdb.org/EllipticCurve/Q/704k1}{\mathrm{704k1}} & 2^{6} \cdot 11 & 1152 & 16 & 1 & 72 \\
\href{http://www.lmfdb.org/EllipticCurve/Q/1728a1}{\mathrm{1728a1}} & 2^{6} \cdot 3^{3} & 3456 & 48 & 1 & 72 \\
\href{http://www.lmfdb.org/EllipticCurve/Q/1728v1}{\mathrm{1728v1}} & 2^{6} \cdot 3^{3} & 3456 & 48 & 1 & 72 \\
\href{http://www.lmfdb.org/EllipticCurve/Q/3456a1}{\mathrm{3456a1}} & 2^{7} \cdot 3^{3} & 6912 & 96 & 1 & 72 \\
\href{http://www.lmfdb.org/EllipticCurve/Q/3456e1}{\mathrm{3456e1}} & 2^{7} \cdot 3^{3} & 6912 & 96 & 0 & 72 \\
\href{http://www.lmfdb.org/EllipticCurve/Q/131a1}{\mathrm{131a1}} & 131 & 132 & 2 & 1 & 66 \\
\hline
\href{http://www.lmfdb.org/EllipticCurve/Q/575a1}{\mathrm{575a1}} & 5^{2} \cdot 23 & 720 & 12 & 1 & 60 \\
\href{http://www.lmfdb.org/EllipticCurve/Q/711a1}{\mathrm{711a1}} & 3^{2} \cdot 79 & 960 & 16 & 1 & 60 \\
\href{http://www.lmfdb.org/EllipticCurve/Q/755b1}{\mathrm{755b1}} & 5 \cdot 151 & 912 & 16 & 1 & 57 \\
\href{http://www.lmfdb.org/EllipticCurve/Q/999b1}{\mathrm{999b1}} & 3^{3} \cdot 37 & 1368 & 24 & 1 & 57 \\
\href{http://www.lmfdb.org/EllipticCurve/Q/49a1}{\mathrm{49a1}} & 7^{2} & 56 & 1 & 0 & 56 \\
\href{http://www.lmfdb.org/EllipticCurve/Q/1323m1}{\mathrm{1323m1}} & 3^{3} \cdot 7^{2} & 2016 & 36 & 1 & 56 \\
\href{http://www.lmfdb.org/EllipticCurve/Q/243a1}{\mathrm{243a1}} & 3^{5} & 324 & 6 & 1 & 54 \\
\href{http://www.lmfdb.org/EllipticCurve/Q/405c1}{\mathrm{405c1}} & 3^{4} \cdot 5 & 648 & 12 & 1 & 54 \\
\href{http://www.lmfdb.org/EllipticCurve/Q/459a1}{\mathrm{459a1}} & 3^{3} \cdot 17 & 648 & 12 & 1 & 54 \\
\href{http://www.lmfdb.org/EllipticCurve/Q/101a1}{\mathrm{101a1}} & 101 & 102 & 2 & 1 & 51 \\
\href{http://www.lmfdb.org/EllipticCurve/Q/335a1}{\mathrm{335a1}} & 5 \cdot 67 & 408 & 8 & 1 & 51 \\
\href{http://www.lmfdb.org/EllipticCurve/Q/591a1}{\mathrm{591a1}} & 3 \cdot 197 & 792 & 16 & 1 & 99/2 \\
\href{http://www.lmfdb.org/EllipticCurve/Q/485b1}{\mathrm{485b1}} & 5 \cdot 97 & 588 & 12 & 1 & 49 \\
\href{http://www.lmfdb.org/EllipticCurve/Q/723b1}{\mathrm{723b1}} & 3 \cdot 241 & 968 & 20 & 1 & 242/5 \\
\href{http://www.lmfdb.org/EllipticCurve/Q/69a1}{\mathrm{69a1}} & 3 \cdot 23 & 96 & 2 & 0 & 48 \\
\href{http://www.lmfdb.org/EllipticCurve/Q/105a1}{\mathrm{105a1}} & 3 \cdot 5 \cdot 7 & 192 & 4 & 0 & 48 \\
\href{http://www.lmfdb.org/EllipticCurve/Q/141d1}{\mathrm{141d1}} & 3 \cdot 47 & 192 & 4 & 1 & 48 \\
\href{http://www.lmfdb.org/EllipticCurve/Q/155c1}{\mathrm{155c1}} & 5 \cdot 31 & 192 & 4 & 1 & 48 \\
\href{http://www.lmfdb.org/EllipticCurve/Q/213a1}{\mathrm{213a1}} & 3 \cdot 71 & 288 & 6 & 0 & 48 \\
\hline
	\end{array}\]
	\caption{The elliptic curves $E/\QQ$ of conductor $< 500.000$
		such that the modular parametrization
		$\phi : X \rightarrow E$ according to
		the LMFDB~\cite{lmfdb,cremona1992,sage}
		gives $r(C) \geq 66$ or gives $r(C) \geq 48$ and odd~$N$.
	}\label{tab:cremona}
\end{table}

\section{Application: the CM method}\label{sec:apply}
	
Class polynomials are used in the \emph{CM method} for constructing
elliptic curves over finite fields with a specified characteristic polynomial
of Frobenius. 

The input to the CM method is a monic quadratic polynomial $P = x^2 - tx + q\in\ZZ[x]$,
where $q$ is a prime power coprime to $t$, and the discriminant $d=t^2-4q$ is negative.
The output is an elliptic curve $E/\FF_q$ with $q+1-t$ rational points,
which has $P$ as its characteristic polynomial of Frobenius. 

The algorithm of the classical CM method (without using class invariants for now) is as follows. Let $K = \QQ(\sqrt{d})$.
\begin{enumerate}
\item \label{it:computeHCP} Compute the Hilbert class polynomial $H_K$ of $\mathcal{O}_K$.
\item \label{it:findroot}
Find a root $j_0\in\FF_q$ of $H_K$ (which is known to split
into linear factors in~$\FF_q$).
\item \label{it:constructandcheck}
Construct an elliptic curve $E/\FF_q$ with $j(E)=j_0$.
Compute all twists of $E$
and return the one with $q+1-t$ rational points.
\end{enumerate}

In practice, one can discard the curves for which $(q+1-t)Q\not=O$ for some random point $Q$, although there are also more straightforward methods to select the correct twist \cite{rs2010choosing}.

As the degree and height of the Hilbert class polynomial grow quickly with
the absolute value of
the discriminant $\Delta_K$ of~$K$, the CM method is only feasible for small values of~$|\Delta_K|$.
The record computation of \cite{sutherland2012crt} uses class invariants, specifically arising from the Atkin function $A_{71}$.
Combined with the tricks listed in Remark~\ref{rmk:besidesrc} this allows to handle a case where $|\Delta_K| > 10^{16}$.

We will now describe how to apply the CM method using generalized class polynomials.
Hence let $C$ be an elliptic modular curve. Since we are working with alternative class invariants instead of the usual $j$-invariant,
we will relate the two using \emph{modular polynomials}
as follows.

\begin{lemma}\label{lem:modularpolynomials} 
Let $d_j:=[\QQ(C,j):\QQ(C)]$. Then there exists a polynomial
$\Psi_C=\sum_{i=0}^{d_j} f_iZ^i\in \ZZ[X,Y][Z]$
of degree $d_j$ in $Z$ such that
\begin{enumerate}[(i)]
\item $\Psi_C(j)=0$;
\item \label{it:degreey} $\deg_{Y}(f_i) \leq 1$ for each $i$;
\item \label{it:coprime} the coefficients (in $\ZZ$) of $\Psi_C$ viewed as an element of $\ZZ[X,Y,Z]$ are coprime;
\item \label{it:onezero} viewed as elements of $\QQ(C)$, the $f_i$ have at most one common zero in $C(\overline{\QQ})$.
\end{enumerate}
Furthermore, $\Psi_C$ is unique up to sign.
\end{lemma}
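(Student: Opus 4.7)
The plan is to produce $\Psi_C$ by multiplying the monic minimal polynomial of $j$ over $\QQ(C)$ by a carefully chosen element $h \in \QQ(C)^\times$ that clears finite-pole denominators in an optimal way, and then to normalize the integer content. First, since $j \in \QQ(X(N))$ is algebraic over $\QQ(C) \subset \QQ(X(N))$ of degree $d_j$, it has a monic minimal polynomial $P(Z) = Z^{d_j} + \sum_{i < d_j} g_i Z^i \in \QQ(C)[Z]$. Each $g_i$ is a symmetric function in the $\Gal(\QQ(X(N))/\QQ(C))$-conjugates of $j$, which are all modular functions, so every $g_i$ has poles only at the finite set $S \subset C$ of images of cusps of $X(N)$; we arrange that $\infty \in S$.

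Next I would set $\Psi_C = h \cdot P$ with $h \in \QQ(C)^\times$ to be chosen. Since $\QQ(C) = \QQ(x)[y]/(y^2 + a_1 xy + a_3 y - x^3 - a_2 x^2 - a_4 x - a_6)$, every element has $Y$-degree at most one, so (ii) is automatic. The remaining requirement on $f_i = h g_i$ is that it lie in $\QQ[x,y]$, equivalently, that it have no finite poles. Setting
\begin{equation*}
D = \sum_{P \in S \setminus \{\infty\}} \max_i(-\ord_P g_i) \cdot P,
\end{equation*}
this amounts to $h \in \mathcal{L}(n \cdot \infty - D)$ for some $n \geq 0$. By Riemann--Roch on the genus-one curve $C$, such an $h$ exists with $n \in \{\deg D, \deg D + 1\}$, and is then unique up to a $\QQ^\times$-scalar. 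For this choice, the effective divisor $E := \div(h) + n \cdot \infty - D$ has degree at most $1$, and (iv) follows by showing every common zero $P \neq \infty$ of the $f_i$ lies in the support of $E$: for $P \notin S$, since $g_{d_j}=1$ is regular at $P$, the hypothesis forces $\ord_P h > 0$; for $P \in S \setminus \{\infty\}$, the hypothesis $\ord_P f_i > 0$ for every $i$ forces $\ord_P h > D_P$. Scaling $\Psi_C$ by a positive integer to clear denominators and then dividing by the integer content secures (iii) while preserving (i), (ii), and (iv).

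For uniqueness up to sign, any second solution $\Psi'_C = \lambda \Psi_C$ with $\lambda \in \QQ(C)^\times$ has $\lambda = h'/h$, where $h'$ is subject to the same pole/zero constraints as $h$. Provided both $h$ and $h'$ realize the minimal $n$ above, one-dimensionality of $\mathcal{L}(n \cdot \infty - D)$ forces $\lambda \in \QQ^\times$, and primitivity of both $\Psi_C, \Psi'_C$ in $\ZZ[X,Y,Z]$ then forces $\lambda = \pm 1$. I expect the main obstacle to lie here: in principle one could construct an alternative $h$ of larger pole order at $\infty$ by concentrating all excess zeros at a single rational ``$k$-division point'' $Q \in C(\QQ)$, which a priori could still satisfy (i)--(iv). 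Ruling this out combines the rationality of $Q$ with the Mordell--Weil/torsion structure of $C(\QQ)$ and is the most delicate part of the argument.
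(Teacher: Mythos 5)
Your construction is essentially the paper's: take the minimal polynomial of $j$ over $\QQ(C)$, multiply by a rational function on $C$ to clear the finite poles, and use the genus-one geometry of $C$ to keep the common vanishing locus as small as possible --- you phrase this via Riemann--Roch and $\mc{L}$-spaces, the paper via the explicit group law and principal divisors, but on an elliptic curve these say the same thing. The existence part is fine. The gap you flag, uniqueness, is real, and it is worth noting that the paper's own proof offers no justification beyond asserting that dividing the coefficients by $g$ gives them ``unique up to $\QQ^\times$.''

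Your worry is actually warranted under the literal set-theoretic reading of condition (iv): the common zero $Q$ that the construction produces is a $\QQ$-rational group-law combination of cusps, hence torsion by Manin--Drinfeld, and multiplying $\Psi_C$ by a function with divisor $m(Q)-m(O)$, where $m$ is the order of $Q$ (or of any rational torsion point when $Q=O$), yields a second polynomial still satisfying (i)--(iv) if ``one common zero'' means one point. The way to close the gap is to read (iv) with multiplicity: require that the effective divisor $\sum_{P\ne O}\min_i\ord_P(f_i)\,(P)$ have degree at most one. Then if $\Psi_C'=\lambda\Psi_C$ with $\lambda\in\QQ(C)^\times$ is a second solution, the requirement $\lambda f_i\in\QQ[x,y]$ forces $\ord_P(\lambda)\geq -\min_i\ord_P(f_i)$ at each finite $P$, and the degree bound applied to the $\lambda f_i$ forces $\ord_O(\lambda)\geq 0$; hence $\lambda\in\mc{L}((Q))$ (or $\mc{L}(0)$), a one-dimensional space on a genus-one curve, so $\lambda\in\QQ^\times$, and (iii) pins down the sign. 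Your instinct about division points is therefore correct, but the cure is to strengthen hypothesis (iv), not to invoke the Mordell--Weil or torsion structure of $C(\QQ)$.
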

\begin{proof}
Consider the minimal polynomial 
$\Psi_C^0 = \sum_{i=0}^{d_j} g_iZ^i\in \QQ(C)[Z]$
of $j$ over $\QQ(C)$.
Let
\begin{equation*}
\mc{E}:=\sum_{P\in C\setminus\{O\}} \min_i(\ord_P(g_i))(P).
\end{equation*}
Then $\mc{E} - \left(\sum_{P\in C}\ord_P(\mc{E})P\right) - (\deg(\mc{E})-1)(O)$ is a
$\QQ$-rational principal divisor.
There is a unique function $g$ up to $\QQ^\times$-scaling
such that $\div(g)=\mc{E}$.
Dividing each $g_i$ by $g$ gives $g_i\in \calL(\infty(O)) = \QQ[x,y]$
satisfying~\eqref{it:onezero}
and unique up to~$\QQ^\times$.
Now take representatives $f_i$ satisfying~\eqref{it:degreey}
and scale to get \eqref{it:coprime},
which makes $\Psi_C$ unique up to sign.
\end{proof}
For each curve $C$ with which we would like to apply the
generalized CM method, the polynomial
$\Psi_C\in\ZZ[X,Y,Z]$ can be precomputed and stored.
Next we need a criterion for which discriminants $D$ yields class invariants.
For example, if $C=\xplusN$ then this is given by Proposition~\ref{prop:Nsystem}.
Now, given a desired characteristic polynomial of Frobenius $x^2-tx+q$ such that $D=t^2-4q$ satisfies this criterion, we have the following algorithm for sufficiently large~$|D|$.

\begin{enumerate}
\item[(1)]\label{it:computeF} Compute a generalized class function $F$
of discriminant $D$ as well as its Heegner point $Q$.
\item[(2a)] 
Find a zero $P=(x,y)\in C(\FF_q)$ of $F$ that is 
neither~$-Q$ nor a common root of the polynomials
$f_1,\ldots, f_{d_j}$ of Lemma~\ref{lem:modularpolynomials}.
\item[(2b)] Find all roots $j_0\in\FF_q$ of the polynomial $\Psi_C(x,y,Z)\in\FF_q[Z]$.
\item[(3)] For each $j_0$,
construct an elliptic curve $E/\FF_q$ with $j(E)=j_0$
and all of its twists up to isomorphism over~$\FF_q$.
Return one with $q+1-t$ rational points.
\end{enumerate}

The main advantage compared to the classical CM method,
both in terms of memory and speed, is expected to be
in the (dominant) first step \eqref{it:computeHCP}
(see Section~\ref{sec:runtime}).
Out of the computationally non-dominant steps,
only (2a) is less straightforward. One way to proceed would be as follows.

\begin{enumerate}[(i)]
\item\label{it:norm} Compute $F_x:=N_{\FF_q(\curve)/\FF_q(x)}(F)$.
\item Find a root $x\in\FF_q$ of $F_x$.
\item Solve for the corresponding value of $y$
using the linear polynomial $\HC(x,Y)$, 
or continue with both solutions $y$ coming from the Weierstrass equation.
\end{enumerate}

\begin{remark}
The polynomial $F_x$ is very close to the classical class polynomial $H_{\tau}[x]$;
indeed, it has the same roots, together with one additional root at the
$x$-coordinate of the Heegner point of~$F$.
The norm computation in step \eqref{it:norm} is however computationally
asymptotically dominated by the computation of~$F$.
\end{remark}

    \section{The computational benefits of our invariants}\label{sec:runtime}
    
    \subsection{Space complexity of the functions}
    
    The advantage of using generalized class functions lies in their size.
    This already gives a serious advantage when storing
    one or more class polynomials for later use,
    e.g.\ for various
    values of~$q$ in the CM method.
    Additionally, one would expect the smaller size to make the generalized class polynomials less expensive to compute.
    Again for $C$ a modular elliptic curve with a given Weierstrass model, we present a preliminary analysis of the cost of computing a generalized class polynomial $H_{\tau}[C]$ when compared to the ``classical'' class polynomial $H_{\tau}[x]$ (though recall that the latter already dominates all previously-known class invariants along a positive density subset of discriminants for
    $C=\xplus{119}$, cf.\ Section~\ref{ssec:comparison}).
    
    \subsection{Speed of complex analytic computation}
    \label{ssec:complexanalytic}
    
    We now explain how to adapt the complex analytic approximation
    algorithm to generalized class polynomials.
    
    To compute the classical class polynomial $\Hx$ one first evaluates $x(\tau)$
    and all its conjugates, which are of the form $x_i(\tau_i)$,
    where $x_i$ and $\tau_i$ can be obtained
    using Shimura's reciprocity law~\cite{gee-stevenhagen1998}
    or $N$-systems~\cite{schertz2002weber}.
    Then one multiplies the linear polynomials $X-x_i(\tau_i)$ together
    in a binary tree using fast multiplication algorithms.
    
    As $\HC$ has roughly half the height, we only need half the precision
    at each step. This gives a great speed-up when evaluating $x_i(\tau_i)$,
    but then we also need to compute $y_i(\tau_i)$.
    Fortunately that should only take a fraction of the time required
    for computing $x_i(\tau_i)$, as we can first compute it to low precision
    and then obtain as many digits as desired quickly using
    $$ y =  \frac{-g(x) + \sqrt{g(x)^2+4f(x)}}{2}$$
    for $C : y^2 + g(x) y = f(x)$.
    
    The \textbf{binary tree} step is harder to analyze.
    Instead of having polynomials
    $A(X) = \prod_{i\in S} (X-x_i(\tau))$ to multiply for various subsets $S\subset\{1,2,\ldots, n\}$,
    we will have pairs $(F, Q)$ with
    $F = A(X)+B(X)Y$ and
    $$\div(F) = \sum_{i\in S} (P_i) + (Q) - (\#S+1)\D.$$ 
    Instead of a single multiplication $A_1A_2$
    to go from $S_1$ and $S_2$ to $S_3 = S_1\sqcup S_2$,
    we now need to compute the point $Q_3 = Q_1+Q_2$
    (with the elliptic curve group law)
    and a function $F_3$ with
    $$ \div(F_3) = \sum_{i\in S} (P_i) + (Q_3) - (\#S_3+1)\D
    = \div(F_1)+\div(F_2) + (Q_3) + (O) - (Q_1) - (Q_2).$$
    The following formula can be used:
    \begin{align}
    	F_3 &= \frac{F_1\ F_2\  R
    		\quad \mathrm{mod}\quad (Y^2-f(X))}
    	{(X-x(Q_1))(X-x(Q_2))},\quad \mbox{where}\label{eq:F3}
    	\\
    	R &=
    	(x(Q_1)-x(Q_2))\ Y\  +\  (y(-Q_2)-y(-Q_1))\ X\\
    	&\ \  + x(Q_2)y(-Q_1) - x(Q_1)y(-Q_2),
    \end{align}
    and  
    where the reduction modulo
    $Y^2-f(X)$ keeps the outcome of degree $\leq 1$ in $Y$.
    
    We can multiply $F_1$ with $F_2$ using three multiplications
    of half the degree, by the same trick that is used
    in Karatsuba multiplication.
    Indeed, let 
 $$ C = A_1A_2,\quad D = B_1B_2,\quad \mbox{and}\quad
 E=(A_1+B_2)(A_2+B_1)$$
 to get $F_1F_2 = (C+Df) + (E-C-D)Y$.
     So computing $F_3$
    involves three polynomial multiplications
    of half the degree of $F_1$ and~$F_2$,
    as well as various multiplications and long divisions
by fixed-degree polynomials and various additions
and subtractions.
The most serious computations in the binary tree are now
done with half the degree \emph{and} half the number of digits,
but three times as often, which takes $3/16$th of the time
with naive multiplication and still less than $3/4$ of the time with
quasi-linear-time multiplication. The impact of the extra additions and subtractions, as well as the extra multiplications by a linear polynomial in $X$ and $Y$ and long division by the denominator of \eqref{eq:F3} requires further analysis, but we expect this to be minor. Regardless, for large discriminants, the main bottleneck is in memory complexity (as noted in \cite[Section~7]{enge2009complexity}), and here we obtain an improvement of a factor of $1/2$ when passing from $\Hx$ to~$\HC$.
    
    \subsection{Adapting the CRT method}

    \subsubsection{Overview of CRT class polynomial computation}

    We now heuristically estimate the expected speed-up
    when computing $\F$ instead of $\Fx$ using the
    (currently state-of-the-art) CRT method for class
    polynomial computation \cite{enge-sutherland2010,sutherland2011crt,sutherland2012crt}.
    We restrict to the case of $C$ such that all $q$-expansion coefficients
    of $x$ and $y$ are rational, and will analyse some steps
    only in the main case where $C$ is a quotient of $\xplusN$.
    To keep our exposition simple, we will not treat the main improvement of~\cite{sutherland2012crt},
    even though we do expect it to combine well with our generalized class invariants.
    We plan to give a more detailed account and an implementation
in future work.
    
    For the CM method, it is more efficient to directly compute class polynomials
    modulo $q$ using
    the \emph{online CRT}
    as in \cite[Section~2]{sutherland2011crt}.
    In other words, we never write down $\F\in\ZZ[X,Y]$,
    but instead compute $(\F\ \mathrm{mod}\ q)\in\FF_q[X,Y]$
    directly
    from $(\F\ \mathrm{mod}\ p)$ for $p$ in a set $S$ of small primes.
    The space complexity of the CM method is then $n\log(q)$, which is independent
    of our choice of class function.
    The set $S$ must be chosen in such a way that $\prod_{p\in S} p$
    is larger than 4 times the largest coefficient.
    
    By cutting the number of digits
    in half when switching from $x$ to~$C$, we essentially cut $\# S$ in half.
    If the amount of work that we do for each prime $p$ does not grow too much,
    then our class function $\F$ yields a speed-up over the classical class polynomial $\Fx$.
    
    What needs to be done for each $p$ is the following.
    \begin{enumerate}
    	\item\label{it:enumerate} Enumerate all $E''$ with endomorphism ring $\mathcal{O}$
    	and compute the appropriate points in $C(\FF_p)$.
    	\item\label{it:combine} Compute $(F\ \mathrm{mod}\ p)$ by putting together the information
    	from Step \ref{it:enumerate}.
    \end{enumerate}
    In practice, for ``typical'' discriminants $D$ with 9 to 14 digits,
    Sutherland~\cite[Sections 8.3 and~8.4]{sutherland2011crt}
    finds that performing Steps \ref{it:enumerate} and~\ref{it:combine} together $\#S$ times 
    is the dominant part of the CRT method.
    
    We will now argue why we expect each of these steps to take
    (much) less than twice as long 
    with the generalized class polynomial for suitable~$C$.
    Together with the fact that our set $S$ is only half the original size
    due to the reduction factor,
    this means that computing $\F$ takes less
    time than computing~$\Fx$.
    
    \subsubsection{Enumerating via the Fricke involution}
    
    Step~\ref{it:enumerate} is already very subtle in the case of a single class invariant~$f$.
    Indeed, there could be
    multiple Galois orbits of values $f(\tau)$ for the same order~$\mc{O}$,
    and hence multiple irreducible class polynomials $H_{\tau_i}[f]\in K[X]$.
    In the CRT method, one has to make sure to compute the polynomials
    $(H_{\tau_i}[f]\ \mathrm{mod}\ p)_p$ for the same value of~$i$,
    and only for $\tau_i$ for which this is a class invariant.
    This issue is addressed in detail in \cite[Section~4]{enge-sutherland2010}.
    
    We will first explain how to adapt one solution to our
    main case of quotients $C$ of $\xplusN$ where $N$ is coprime
    to the conductor of $\mc{O}$ and $D=\mathrm{disc}(\mc{O})$
    is a square modulo~$4N$.
    
    We adapt the method of Section~4.3 of \cite{enge-sutherland2010} as follows.
    We have $\QQ(\xnulN) = \QQ(j,j_N)$, where
    $j_N(z) = j(z/N) = j(W_N z)$
    for the Fricke-Atkin-Lehner involution $W_N : z\mapsto -N/z$
    (this follows for example from \cite[Proposition~6.9]{shimura1971}).
    In particular, every function $f\in\QQ(C)$ for a quotient $C$ of $\xnulN$
    can be expressed as a rational function in $j$ and~$j_N$.
    In practice, these expressions can be quite large, but (analogously to \cite[Lemma~2]{enge-sutherland2010})
    we can also obtain the value $f(z)$ as a root of $\gcd(\Psi_{f}(X,j(z)),\Psi_{f\circ W_N}(X,j_N(z)))$
    instead.
    
    In the particular case where $C$ is a quotient of $\xplusN$, we even have
    $\QQ(C)\subset \QQ(\xplusN) = \QQ(j+ j_N,j\cdot j_N)$,
    and we can use $\Psi_{f}$ instead of $\Psi_{f\circ W_N}$.
    
    So instead of enumerating just the $j$-values, we wish to link them with the corresponding
    $j_N$-values, and we do that as follows.
    
    Suppose that $N$ is coprime to the conductor of $\mc{O}$ and that
    $D$ is a square modulo~$4N$. Then by Lemma~\ref{lem:exceptions}
    we get $a,b,c\in\ZZ$ with $a,c>0$, $b^2-4ac = D$, $N\mid c$, and
    $\gcd(ac/N,N)=\gcd(a,b,c)=1$.
    In line with Lemma~2 of \cite{enge-sutherland2010} we could even take~$c=N$
    by replacing $a$ by~$ac/N$.
    We take $z = \frac{-b+\sqrt{D}}{2a}$, $\mathfrak{n} = a\overline{z}\ZZ+N\ZZ$,
    and $\fraka = z\ZZ+\ZZ$.
    Then we have $\mc{O} = az\ZZ+\ZZ$,
    and we find that $\mathfrak{n}$ is an invertible $\mc{O}$-ideal 
    with $\mc{O}/\mathfrak{n}\cong \ZZ/N\ZZ$.
    In fact, we find $\overline{\mathfrak{n}}\fraka = z\ZZ+N\ZZ$ and hence
    $$\sigma_{[\mathfrak{n}]}j(z) = j(\mathfrak{n}^{-1}\fraka) 
    =  j(\overline{\mathfrak{n}}\fraka) = j_N(z).$$

    Exactly as in Section~4.3 of \cite{enge-sutherland2010}, we list 
    the $j$-values of elliptic curves over $\FF_p$ with
    endomorphism ring~$\mc{O}$, and arrange them into
    unoriented $[\mathfrak{n}]$-isogeny cycles.
    If $C$ is a quotient of $\xplusN$ over~$\QQ$,
    then for each edge of this graph, we find the $f$-value from the
    two $j$-values of the end points.
    (In the case where the $[\mathfrak{n}]$-isogeny cycles are 2-cycles,
    we only get one $f$-value per 2-cycle and we get a lower-degree class polynomial~$\Ff$.)
    
    In practice, we could do this for $f=x$
    exactly as in \cite{enge-sutherland2010}, and then solve for $y$
    using $\Psi_{C}(x,y,j)=0$, which is linear in~$y$.
    The only additional work compared to what is done in \cite{enge-sutherland2010}
    is computing and solving the linear equation to get~$y$, which
    is much faster than all the other steps. 
    
    In particular, Step~\ref{it:enumerate} takes much less
    than twice as long with $C$ than with~$x$,
    while we need to do it only half as often,
    which leads to a speed-up.
    Further research into these modular polynomials
    is needed in order to determine the exact gain.
    
    To also make this work for quotients of $\xnulN$ that are not quotients of $\xplusN$,
    one would need to compute oriented $[\mathfrak{n}]$-isogeny cycles.
    
\subsubsection{Other tricks for enumerating}
The methods from \cite[Sections 4.1 and~4.2]{enge-sutherland2010} also
seem amenable.
    
    The main computational tool at the beginning of Section~4.1 is the modular polynomial $\Phi_{\ell,f}$,
    which we generalize from $f$ to $C$ as follows.
    
    Let $\Phi_{\ell,C}$ be a Gr\"obner basis
    of the ideal in $\QQ[X_1,Y_1,X_2,Y_2]$ of polynomials that vanish
    on $\{(\psi(z),\psi(\ell z)) : z\in\HH\}$,
    with respect to the lexicographic ordering with $X_1>Y_1>X_2>Y_2$.
    To get from $\psi(z)$ to all possible values of $\psi(\ell z)$,
    one substitutes $\psi(z)$ for $(X_1,Y_1)$, and then solves first for
    $X_2$ and then for~$Y_2$.
    For each $C$ and $\ell$ this works for all but a finite set of primes~$p$.
    Such multivariate modular polynomials would need to be precomputed.
    One possible starting point for computing these would be~\cite{martindale2020,milio-robert2020},
    which compute multivariate (Hilbert) modular polynomials,
    each with a different method. For yet another approach to computing
    modular polynomials, see~\cite{broker-lauter-sutherland2012}.

    We expect the reduction factor to also give a reduction of the size of
    these multivariate modular polynomials, but on the other hand, we need two of them:
    one to solve for $x$ of an isogenous curve,
    and one to evaluate in $x$ and get~$y$.
    As evaluating is faster than solving, we expect the use of these modular polynomials to take much less
    than twice as long (and we need to do it only half as often, because we have half as many primes).

The `Trace Trick' of \cite[Section~4.2]{enge-sutherland2010} enables the use of the Weber function $\mathfrak{f}$
in the CRT method. In case we would also need this trick, for some more exotic
curves~$C$, we could consider applying it with arbitrary
functions $f\in \QQ(C)$ such as $f = ax+by$ for small integers $a$ and $b$.
In loc.~cit.\ the relevant trace is computed with much fewer primes,
so it is ok to apply this with the lower reduction factor of~$f$.

We did not yet consider the general algorithm of~\cite[Section~4.4]{enge-sutherland2010}.
It is the method that works for all class invariants,
but is only practical under additional restrictions.
We do not have examples of generalized class invariants
where this trick is needed. The challenging step to generalize
is factoring a large-degree function in $\QQ(C)$ in order to obtain the small
class functions.
    
    \subsubsection{Constructing a function from its roots}
    
 In the CRT setting the multiplications and long-divisions by
small-degree polynomials of Section~\ref{ssec:complexanalytic}
only take time $O(nM(\log(p)))$ per level, which is asymptotically
dominated by the $O(M(n\log(p))$ time of the multiplications
of large-degree polynomials. Therefore, 
Step \ref{it:combine} seems to take about 1.5 times as long
per prime $p$ for $\HC$ when compared to $\Hx$.
    
    \subsubsection{The total running time}

    Concluding this preliminary analysis, we estimate the cost of computing $\HC$ to be significantly lower compared to $\Hx$, though further research, in particular into (the implementation of) modular polynomials for $C$ is required to determine the exact gain. This is beyond the scope of the current paper, which focuses on introducing the generalized class functions and their height reduction.
    We plan to give a more detailed account and an implementation
    in future work.
    
	\section{General curves and bases}\label{sec:general}
	    
    Now suppose that our modular curve
    $\curve$ is not necessarily an elliptic curve.
	Let $\D$ be an effective divisor over
	$\QQ$ on $\curve$
	and let $\mc{B}=\{b_0,b_1,\ldots\}$ be a
	$\QQ$-basis of $\mc{L}(\infty\mc{D})$ ordered by ascending degree. 

	The classical case is the case where we have one modular function $f$
	and we take $\curve = \PP^1$, $\psi = f  = (f:1)$, $\D = ((1:0))=(\infty)$, and $\mc{B}=\{1,f,f^2,\ldots\}$.
	The case of all previous sections of this paper is the case where
	$\curve$ is an elliptic curve given by a Weierstrass equation, $\D = ((0:1:0))$, and $\mc{B}=\{1,x,y,x^2,xy,x^3,x^2y,\ldots\}$.
	
\begin{example}\label{example:constructbasis}
    One systematic way to choose a $\QQ$-basis of $\calL(\infty\D)$ is as follows.
	First choose $x\in \calL(\infty \D)\setminus\QQ$ of some degree $d$.
	(For example, one can take $x=f$ with $d=1$ in the classical case, and $x=x$ with $d=2$ in the elliptic case.) Now, let $y_0 = 1$ and 
		choose $y_j$ for $j = 1,2,\ldots, d-1$ in such a way
		that $$y_j \in \calL(m_j \D)\setminus\langle y_k x^e : k < j, e\in\ZZ\rangle,$$
		where $m_j$ is minimal such that this set is non-empty.
		This way we obtain a vector $\vec{y} = (y_0,\ldots, y_{d-1})$ of $d$ functions. (For example, in the classical case we have $\vec{y}=1$, and in the elliptic case we chose $\vec{y} =(1,y)$.) Then $\mc{B}=\{x^e y_j : e\in\ZZ_{\geq 0}, j\in\{0,1,2,\ldots,d-1\}\}$
is a basis of $\mathcal{L}(\infty\D)$.
We order this basis by ascending degree $de+m_j$,
and if two elements have the same degree, then we put the
one with lowest $j$ first.

\end{example}
	
	\begin{example}\label{example:hyperelliptic}
		Consider the modular curve $\xplus{191}$ (not to be confused with~$119$),
		which is hyperelliptic with model
		$t^2 = s^6+2s^4+2s^3+5s^2-6s+1$ \cite[Table~3]{galbraiththesis1996},
		and the unique cusp is at $\mc{D}=((1:1:0))$.
		One of the possible bases of $\mc{L}(\infty\mc{D})$ obtained by the recipe above is $\mc{B}=\{1,x,y_1,y_2,x^2,x^2y_1,x^2y_2,\ldots\}$, where $x=(t+s^3+s+1)/2$,  $y_1=sx$, and $y_2=s(y_1+1)$.
		The degrees of these functions are respectively $3$, $5$, and~$7$.

The function $x$ is, up to multiplicative and additive constants, equal to the Atkin function~$A_{191}$.
The reduction factors are $r(C) = 96$, $r(s) = 48$, and $r(A_{191}) = 32$.
	\end{example}

As in Section \ref{sec:section2}, let $\tau\in\HH$
imaginary quadratic and assume that $(b_i,\tau)$
is a class invariant for every $b_i\in\mc{B}$.
Then, again unique up to scaling, we obtain a non-zero
function $F_{\tau}[C,\mc{B}]=\sum_{i=0}^k a_ib_i\in K(C)$ 
($a_i\in K$) with $k$ minimal such that $\sum_{i=0}^k a_ib_i(\tau)=0$.
\begin{definition}\label{def:genclasspoly2}
We call this $F_{\tau}[C,\mc{B}]$ the \emph{generalized class function}
for the triple $C,\mc{B},\tau$.

If $\mc{B}$ is as in Example~\ref{example:constructbasis}
then we again refer to the associated polynomial
$H_{\tau}[C,\mc{B}]\in K[X,Y_1,\ldots,Y_d]$
(of total degree $\leq 1$ in $Y_1,\ldots, Y_d$ and
such that $H_{\tau}[C,\mc{B}](x,y_1,\ldots, y_d) =
F_{\tau}[C,\mc{B}]$)
as the \emph{generalized class polynomial}. 
\end{definition}

\begin{example}\label{example:otherbasis}
It turns out that, already for the case of elliptic curves, allowing the freedom of the choice of basis of may in reality lead to potentially better practical reduction factors.
Revisiting our main example $C:=\xplus{119}$,
denote by $w:=\mathfrak{w}_{7,17}$ the function \eqref{eq:defw} and by $z:=x+y$ the sum of the Weierstrass coordinates for the model \eqref{eq:weierstrass}. Now consider the basis $\mc{B}:=\{1,x,z,w,xz,wx,wz,w^2,wxz,w^2x,\ldots\}$ of $\mc{L}(\infty\mc{D})$. The resulting generalized class polynomials corresponding to the discriminants of Table~\ref{tab:119} are listed in Table~\ref{tab:119gen}.
We get practical reduction factors in Figure \ref{fig:119h100gen} that are better than those in Figure~\ref{fig:119h100}.

A likely explanation for this improvement is that now not only the poles,
but also the zeroes are as much restricted to the cusps of $\xplus{119}$ as possible.
Indeed, the points $O=(0:1:0)$ and $P=(0,0)$ are the cusps, while
$2P$ and $3P$ are rational CM points.
Now $\div(w) = 4(P)-4(O)$,
$\div(x) = (P) + (3P) - 2(O)$,
and $\div(y) = 2(P) + (2P) -3(O)$.
In particular, the function $w$ is a modular unit.
As explained in Remark~\ref{rem:changemodels}, modular units in the classical setting
give better practical reduction factors than non-units, even though the reduction factors are asymptotically the same.
\end{example}

\begin{table}[htbp]
\setlength{\tabcolsep}{3pt}
\renewcommand{\arraystretch}{1.2}
\begin{tabular}{c|c|c}
 $D$  & $n$ & $F_{\tau}[C,\mc{B}]$ \\ \hline
 $-52$ & $2$ & $z-x+1$ \\ \hline
 $-523$ & $5$ & $xw - xz - x + 3w + z$\\ \hline
 \multirow{2}{*}{$-5347$} & \multirow{2}{*}{$13$} & $xw^3 - 10xw^2z + 42xw^2 + 48w^3 + 13xwz + 35w^2z + 62xw $\\
 & &  $ + 104w^2 + 39xz + 90wz - 11x + 41w + 39z + 1$ \\ \hline
 \multirow{5}{*}{$-15139$} & \multirow{5}{*}{$29$} & $xw^7 - 33xw^6z + 5874xw^6 + 849w^7 - 2119xw^5z - 3865w^6z $\\
 & & $+ 31183xw^5 - 4249w^6 + 2200xw^4z - 15449w^5z + 36423xw^4$\\
 & & ${-}29399w^5{+}6066xw^3z{-}46282w^4z{+}46223xw^3{-}27578w^4{+}6207xw^2z$\\
 & &   $ - 30128w^3z + 31320xw^2 - 47581w^3 + 6757xwz - 35595w^2z$\\
 & & $ + 8017xw - 17181w^2 - 742xz - 10159wz - x - 2797w + 22z$\\ \hline
\end{tabular}
\\[2mm]
\caption{Some conjecturally correct generalized class functions for the curve $C=\xplus{119}$ using the $\mc{L}(\infty\mc{D})$-basis $\mc{B}:=\{1,x,z,w,xz,wx,wz,w^2,wxz,w^2x,\ldots\}$.}
\label{tab:119gen}
\end{table}

\begin{figure}[ht]
\begin{subfigure}[ht]{1\textwidth}
\centering
\includegraphics[scale=0.8]{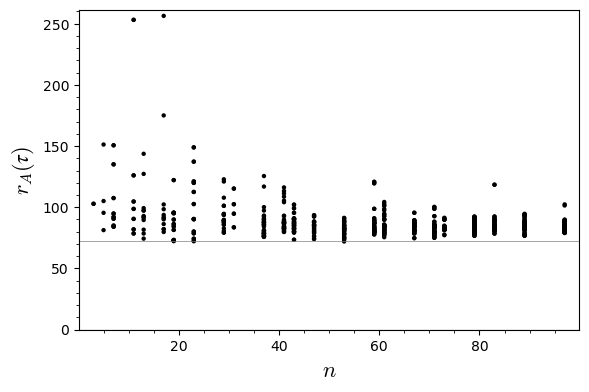}
\end{subfigure}
\vfill
\begin{subfigure}[ht]{1\textwidth}
\centering
\includegraphics[scale=0.8]{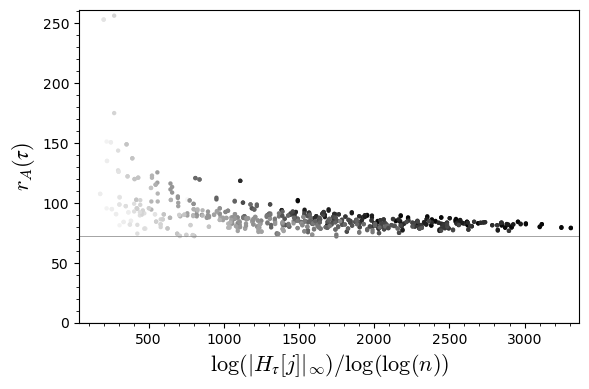}
\end{subfigure}
\caption{Practical reduction factors for $H_{\tau}[\xplus{119},\mc{B}]$ for fundamental discriminants $D$ with $\gcd(D,N)=1$ and prime class number $n<100$.}
\label{fig:119h100gen}
\end{figure}

\begin{theorem}\label{thm:reductiongeneral}
Let $\curve : y^2 + g(x) y = f(x)$ with $f,g\in\QQ[X]$ be a hyperelliptic curve such that $4f(x)+g(x)^2$ has odd degree and $\mathrm{Jac}(\curve)(\QQ)$ is finite. Set $\D$ to be the unique point at infinity and choose the basis $\mc{B}=\{1,x,x^2,y,xy,x^2y,\ldots\}$ of $\mc{L}(\infty\mc{D})$. Then Theorem \ref{thm:reductionhyper} and Proposition~\ref{prop:reductionhyper} also hold for $C$ and $H_{\tau}[C,\mc{B}]$.
	\end{theorem}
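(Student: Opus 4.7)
The plan is to adapt the proof of Proposition~\ref{prop:reductionhyper} almost verbatim, with the finiteness of $\mathrm{Jac}(\curve)(\QQ)$ replacing the rank-zero hypothesis on the elliptic curve, and with the degree-$3$ polynomial $f$ replaced by a degree-$(2\gamma+1)$ polynomial, where $\gamma$ denotes the genus of $\curve$. The key structural observation is that every element of $\mc{B}$ has the form $A(x) + B(x)y$ with $A,B\in\QQ[X]$, so the norm $N(F_\tau[\curve,\mc{B}]) = A(X)^2 - f(X)B(X)^2$ is still a univariate polynomial amenable to interpolation.

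First I would put the model in nice form. The substitution $y\mapsto y - g(x)/2$ kills $g$ and turns the equation into $y^2 = \tfrac{1}{4}(4f(x) + g(x)^2)$, which by hypothesis has odd-degree right-hand side. Scaling $x$ and $y$ makes the new $f$ integral and monic of odd degree $2\gamma+1$, and a shift of $x$ forces $f(z)\leq -1$ for all real $z\leq 0$, exactly as in the elliptic case. Each affine change contributes only $O(m)$ to the relevant logarithms.

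Next I would establish the norm identity. The divisor of $F_\tau[\curve,\mc{B}]$ on $\curve$ takes the form $\sum_\sigma (\sigma\psi(\tau)) + E - (m+\gamma)\mc{D}$ for some effective $E$ of degree $\gamma$. The class $[E - \gamma\mc{D}] \in \mathrm{Jac}(\curve)(\QQ)$ is the negative of the Heegner class, hence lies in a finite group by hypothesis. When the Abel--Jacobi map $\mathrm{Sym}^\gamma(\curve)\to\mathrm{Jac}(\curve)$ is injective at this class (the generic situation), $E$ itself is pinned down, and the polynomial $T(X)\in\ZZ[X]$ of degree $\gamma$ whose roots are the $x$-coordinates of the support of $E + E^*$ (with $*$ the hyperelliptic involution) is drawn from a finite set independent of~$\tau$. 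One concludes that $N(F_\tau[\curve,\mc{B}]) = s\cdot H_\tau[x]^{d'}\cdot T$ for some $s\in\ZZ$, and the highest-weight-term argument with weights $\mathrm{wt}(X) = 2$, $\mathrm{wt}(Y) = 2\gamma+1$ forces $s = \pm 1$. The remaining steps are literally the elliptic case with $3$ replaced by $2\gamma+1$: interpolating $A$ and $B$ at the nodes $-\log(eu)^2$, using $A(z)^2, B(z)^2 \leq N(F_\tau[\curve,\mc{B}])(z)\leq \max\{1,|z|\}^{m+\gamma}|H_\tau[x]|_1^{d'}|T|_1$ for $z\leq 0$, and applying Lemma~\ref{lem:polymeasures} gives $\log|F_\tau[\curve,\mc{B}]|_\infty \leq \tfrac{d'}{2}\log|H_\tau[x]|_\infty + O(m\log\log m)$; and the divisibility $H_\tau[x]^{d'}\mid N(F_\tau[\curve,\mc{B}])/T$ combined with Mahler-measure multiplicativity gives the matching lower bound. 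Feeding this generalization of Proposition~\ref{prop:reductionhyper} into the proof of Theorem~\ref{thm:reductionhyper} then yields the result.

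The principal obstacle is the finiteness assertion about $T(X)$. Unlike the elliptic case, where each rational torsion point of $\curve$ produces a single linear polynomial, in the hyperelliptic setting one must verify both that the rational class in $\mathrm{Jac}(\curve)(\QQ)$ determines $E$ up to the finitely many degenerate classes where the Abel--Jacobi map fails to be injective on $\mathrm{Sym}^\gamma(\curve)$, and that the resulting $T\in\ZZ[X]$ has bounded height. Since $\mathrm{Jac}(\curve)(\QQ)$ is already finite, only finitely many classes need to be examined in total, and handling the degenerate locus reduces to noting that a positive-dimensional family of effective divisors representing a fixed class over $\QQ$ is parametrized by a projective space and still yields only finitely many polynomials~$T$ with \emph{integer} coefficients after the coprimality normalization of $F_\tau[\curve,\mc{B}]$. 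This is the only genuinely new analytical point; once it is in place the rest of the proof is mechanical.
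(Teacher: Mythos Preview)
Your approach coincides with the paper's: the proof of Proposition~\ref{prop:reductionhyper} carries over verbatim, the sole new ingredient being that the linear polynomial $T$ is replaced by a polynomial of degree at most~$\gamma$ arising from a representative of the Heegner class in the finite group $\mathrm{Jac}(\curve)(\QQ)$. The paper's proof is two sentences and says precisely this: for each of the finitely many classes, take the representative $\sum_{i=1}^{r}((P_i)-\D)$ with $r$ minimal and let $T\in\ZZ[X]$ be the primitive polynomial with roots $x(P_i)$.

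Your treatment of the finiteness of $T$, however, is both more complicated than needed and incorrect in the degenerate case. You argue that when the Abel--Jacobi fibre is positive-dimensional, the family of effective $E$'s ``still yields only finitely many polynomials $T$ with integer coefficients after the coprimality normalization''. This is false: a positive-dimensional linear system produces infinitely many distinct primitive integral $T$'s, and the normalization of $F_\tau[\curve,\mc{B}]$ does nothing to collapse them. The correct resolution, implicit in the paper's phrase ``with $m$ minimal'', is that the minimality of $k$ in Definition~\ref{def:genclasspoly2} already forces $E$ to be the \emph{reduced} (Mumford) representative of the class, which is unique. Indeed, if $E$ contained a hyperelliptic pair $(P)+(P^*)\sim 2\D$, then $F/(x-x(P))$ would lie in $\langle b_0,\ldots,b_{k-2}\rangle$ and vanish on the Galois orbit, contradicting minimality of~$k$. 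Hence $E$ is reduced, so $\ell(E)=1$, the divisor $E$ is determined by the class alone, and $T$ ranges over a finite set indexed by $\mathrm{Jac}(\curve)(\QQ)$. (Incidentally $\deg E\leq\gamma$, not necessarily $=\gamma$; this only helps the estimates.) With this correction your write-up is complete and in fact more detailed than the paper's.
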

	\begin{proof}
The original proof now goes through with only the following change.
There are finitely many possibilities for the class $c$ of the divisor
$ - \sum_{\sigma} ((\sigma(\psi(\tau))) - \D)$
by our assumption that 
$\mathrm{Jac}(\curve)(\QQ)$ is finite.
For every~$c$, choose
a representative $\sum_{i=1}^{m} ((P_i)-\D)$ with $m$ minimal
and consider a primitive polynomial $T\in\ZZ[X]$ with
roots $x(P_i)$ for $i=1,\ldots, m$.
\end{proof}

\begin{remark}\label{rem:120}
Our proofs of Theorems \ref{thm:reductionhyper} and \ref{thm:reductiongeneral}
heavily rely on the fact that Heegner points are torsion.
To completely remove the assumption on ranks, one would therefore need to bound
the Heegner points, even in the rank-one case.
Moreover, the proofs rely on the hyperelliptic equation where we use
that $|a| \leq |a+bi|$ for real numbers $a$ and~$b$.
Though we expect an analogue of these results to hold for general modular curves, this would require
additional ideas. Do note that such an analogue would yield arbitrarily high reduction factors
for generalized class polynomials by \eqref{eq:reductionx0}. For example, for $C=\xplus{239}$ of genus $3$ we already obtain $r(C)=120$, exceeding the Br\"oker-Stevenhagen bound.
\end{remark}

\noindent \textbf{Conflict of interest statement.}
The authors assert that there are no conflicts of interest.
\bigbreak\noindent \textbf{Data availability statement.}
The authors declare that the data supporting the findings of this study are available within the article and its supplementary information files.

	\bibliography{bib}{}
	\bibliographystyle{plain}
\end{document}